\documentclass[11pt]{amsart}
\usepackage{hyperref}
\usepackage{amsfonts}
\usepackage{amsmath}
\usepackage{amssymb}
\usepackage{amscd}
\usepackage{graphicx}
\usepackage{enumitem}
\usepackage{latexsym}
\usepackage{color}
\usepackage{comment}
\usepackage{epsfig}
\usepackage{amsopn}
\usepackage{xypic}
\usepackage{mathrsfs}
\usepackage{array}
\usepackage[usenames,dvipsnames]{pstricks}
\usepackage{epsfig}
\usepackage{pst-grad} 
\usepackage{pst-plot} 
\usepackage[space]{grffile} 
\usepackage{etoolbox} 
\makeatletter 
\patchcmd\Gread@eps{\@inputcheck#1 }{\@inputcheck"#1"\relax}{}{}
\makeatother

\hypersetup{pdfpagemode=UseNone}
\sloppy

\setlength{\oddsidemargin}{0cm} \setlength{\evensidemargin}{0cm}
\setlength{\textwidth}{6.55in} 

\setlength{\topmargin}{-.4in}
\setlength{\textheight}{9.0in} \setlength{\headheight}{.2in} 
\setlength{\headsep}{.2in} \setlength{\baselineskip}{0pt}
\setlength{\parskip}{.5mm}

\usepackage{booktabs}
\usepackage{longtable}
\setlength{\LTpost}{-15pt}

\theoremstyle{plain}
\newtheorem{lemma}{Lemma}[section]
\newtheorem*{theorem*}{Theorem}
\newtheorem*{lemma*}{Lemma}
\newtheorem*{proposition*}{Proposition}
\newtheorem*{conjecture*}{Conjecture}
\newtheorem*{corollary*}{Corollary}
\newtheorem*{problem*}{Problem}
\newtheorem{theorem}[lemma]{Theorem}

\newtheorem{corollary}[lemma]{Corollary}
\newtheorem{proposition}[lemma]{Proposition}

\theoremstyle{definition}

\newtheorem{example}[lemma]{Example}
\newtheorem{remark}[lemma]{Remark}

\newcommand{\CC}{\mathbb{C}}

\newcommand{\OO}{\mathcal{O}}
\newcommand{\te}{\otimes}

\newcommand{\cF}{\mathcal F}

\newcommand{\bv}{{\bf v}}

\newcommand{\cE}{\mathcal{E}}

\newcommand{\ZZ}{\mathbb{Z}}

\renewcommand{\cL}{\mathcal{L}}
\newcommand{\PP}{\mathbb{P}}

\DeclareMathOperator{\Pic}{Pic}

\DeclareMathOperator{\rk}{rk}

\DeclareMathOperator{\ext}{ext}

\DeclareMathOperator{\sHom}{\mathcal{H}\kern -.5pt\mathit{om}}
\DeclareMathOperator{\sTor}{\mathcal{T}\kern -1.5pt\mathit{or}}

\DeclareMathOperator{\PGL}{PGL}

\newcommand{\leqor}{\underset{{\scriptscriptstyle (}-{\scriptscriptstyle )}}{<}}

\begin{document}

\date{\today}
\author[I. Coskun]{Izzet Coskun}
\address{Department of Mathematics, Statistics and CS \\University of Illinois at Chicago, Chicago, IL 60607}
\email{coskun@math.uic.edu}

\author[J. Huizenga]{Jack Huizenga}
\address{Department of Mathematics, The Pennsylvania State University, University Park, PA 16802}
\email{huizenga@psu.edu}

\author[J. Kopper]{John Kopper}
\address{Department of Mathematics, The Pennsylvania State University, University Park, PA 16802}
\email{kopper@psu.edu}

\subjclass[2010]{Primary: 14J60, 14J29. Secondary: 14D20}
\keywords{}
\thanks{During the preparation of this article the first author was partially supported by the NSF FRG grant DMS 1664296
and the second author was partially supported by NSF FRG grant DMS 1664303.}

\title{Disconnected moduli spaces of stable bundles on surfaces}

\begin{abstract}
  We use hypersurfaces containing unexpected linear spaces to construct interesting vector bundles on complete intersection surfaces in projective space. We discover examples of moduli spaces of rank 2 stable bundles on surfaces of Picard rank one with arbitrarily many connected components. \end{abstract}

\maketitle

\setcounter{tocdepth}{1}
\tableofcontents

\section{Introduction}
In this note we construct  examples of moduli spaces of rank two bundles on surfaces  of Picard rank one with arbitrarily many connected components.

On a smooth projective curve, the moduli space of semistable bundles of rank $r$ and degree $d$ is irreducible \cite[Cor. 4.5.5]{HuybrechtsLehn}. In contrast, if $\dim(X) \geq 3$, then even the Hilbert scheme of points on $X$ is reducible with components of different dimensions \cite{Iarrobino}. There are many constructions of reducible and disconnected moduli spaces if $\dim(X) \geq 3$ (for example, see \cite{ein}). Moduli spaces of sheaves on surfaces exhibit the most interesting behavior. The Hilbert scheme of $n$ points on a smooth, irreducible projective surface  is smooth and irreducible \cite{Fogarty}. In higher rank,  the philosophy of Donaldson \cite{Donaldson} and Gieseker and Li  \cite{GiesekerLi} expects the moduli spaces $M_X(r, c_1, c_2)$ to become better behaved as the second Chern class $c_2$ tends to infinity. For example, they become reduced and irreducible of the expected dimension if $c_2$ is sufficiently large \cite{OGrady}. On the other hand, if $c_2$ is small, then $M_X(r, c_1, c_2)$ may be reducible with components of different dimensions and may have nonreduced components \cite{Mestrano, MestranoSimpson}. For example, \cite{CoskunHuizengaPathologies} and \cite{Friedman2} construct moduli spaces with arbitrarily many irreducible components. It is reasonable to expect that for small $c_2$, the moduli spaces of sheaves on surfaces satisfy a version of Murphy's Law \cite{Vakil}.

One can construct disconnected moduli spaces of sheaves on  threefolds  by using the Serre correspondence, which relates codimension 2 subschemes  to rank 2 vector bundles (see \S\ref{sec:preliminaries} or \cite{Starr}). Subschemes with a given Hilbert polynomial are parameterized by a Hilbert scheme. If this Hilbert scheme is disconnected, the same can be true of the moduli space of sheaves associated through the Serre correspondence. We use a variant of this argument by relating the moduli space $M_X(r, c_1, c_2)$ on the surface $X$ to a disconnected Hilbert scheme of linear spaces on an ambient higher-dimensional variety. 

Okonek and Van de Ven \cite{OkonekVandeven} and Kotschick \cite{Kotschick} found examples of disconnected moduli spaces on elliptic surfaces in their study of the topology of the underlying real fourfold (see also \cite{Friedman}). These disconnected moduli spaces are on certain elliptic surfaces with large Picard rank. In contrast, our construction uses general type surfaces with Picard rank one.

Let $n \geq 4$ be an integer.  Let $2 < d_1 < d_2 \leq  \cdots \leq d_{n-2}$ be integers. If $n=4$, further assume that $d_1 \geq 4$ and $d_2 \geq 6$. Set $e = \prod_{i=2}^{n-2} d_i.$  Let $D_1 \subset \PP^n$ be a hypersurface of degree $d_1$ that contains a linear space $\Phi$ of dimension $n-3$.  Assume that the singular locus of $D_1$ has codimension  5. For $2\leq i \leq n-2$, let $D_i \subset \PP^n$ be very general hypersurfaces of degree $d_i$. Let $X$ be the  complete intersection  $D_1 \cap \cdots \cap D_{n-2}$, which  is a smooth, projective surface of degree $d_1 e$. Let $H$ denote the hyperplane class on $X$. Let $F_{n-3}(D_1)$ denote the Fano scheme parameterizing linear spaces on $D_1$ of dimension $n-3$.

\begin{theorem*}[\protect{\ref{thm:main_thm}}]
Every connected component of $F_{n-3}(D_1)$ corresponds to a distinct connected component of $M_X(2,H,(d_1-1)e)$ of  the same dimension.
\end{theorem*}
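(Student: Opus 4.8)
The plan is to attach to each $\Lambda\in F_{n-3}(D_1)$ the dual of the syzygy bundle of the three linear forms cutting out $\Lambda$, restricted to $X$. Fix linearly independent forms $\ell_1,\ell_2,\ell_3$ spanning the space of linear forms vanishing on $\Lambda$ and set
\[
K_\Lambda=\ker\!\bigl(\mathcal O_X^{\oplus 3}\xrightarrow{(\ell_1,\ell_2,\ell_3)}\mathcal O_X(H)\bigr),\qquad E_\Lambda=K_\Lambda^{\vee},
\]
which is independent of the chosen basis up to isomorphism. Since $X\subset\PP^n$ is nondegenerate we have $H^0(\mathcal O_X)^{\oplus3}\hookrightarrow H^0(\mathcal O_X(H))$, so the image of $(\ell_1,\ell_2,\ell_3)$ is $I_{\Lambda\cap X/X}(H)$ and $K_\Lambda$ is a second-syzygy sheaf, hence reflexive, hence a rank-$2$ bundle on the smooth surface $X$. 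Because $D_2,\dots,D_{n-2}$ are very general, $\Lambda\cap X=\Lambda\cap D_2\cap\cdots\cap D_{n-2}$ is a reduced complete intersection in $\Lambda\cong\PP^{n-3}$ of length $e$; feeding this into $0\to K_\Lambda\to\mathcal O_X^{\oplus3}\to I_{\Lambda\cap X/X}(H)\to 0$ and chasing Chern characters gives $c_1(E_\Lambda)=H$ and $c_2(E_\Lambda)=H^2-e=(d_1-1)e$. Noether--Lefschetz gives $\Pic X=\ZZ H$, reducing $\mu_H$-stability of $E_\Lambda$ to $H^0(E_\Lambda(-H))=H^0(K_\Lambda)=0$, again immediate from nondegeneracy of $X$. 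Performing this construction over the tautological rank-$3$ bundle on $\Gr(n-3,n)$ produces a flat family of $\mu_H$-stable bundles on $X$ parametrized by $F_{n-3}(D_1)$, hence a morphism
\[
\sigma\colon F_{n-3}(D_1)\longrightarrow M_X(2,H,(d_1-1)e),
\]
so each connected component of $F_{n-3}(D_1)$ is carried into a single connected component of the target.

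Next I would show $\sigma$ is injective on points by reconstructing $\Lambda$ from $E_\Lambda$. Dualizing the defining sequence gives
\[
0\to\mathcal O_X(-H)\to\mathcal O_X^{\oplus3}\to E_\Lambda\to\mathcal{E}xt^1_X\!\bigl(I_{\Lambda\cap X/X}(H),\mathcal O_X\bigr)\to 0,
\]
and the long exact $\Hom_X(-,\mathcal O_X)$-sequence of the defining sequence shows $h^0(E_\Lambda)=3+\dim\Ext^1_X(I_{\Lambda\cap X/X}(H),\mathcal O_X)$. Since both $X$ and the complete intersection $\Lambda\cap X$ are arithmetically Cohen--Macaulay in $\PP^n$, a short diagram chase yields $H^1\bigl(X,I_{\Lambda\cap X/X}(k)\bigr)=0$ for all $k$, and combined with local duality this forces $\Ext^1_X(I_{\Lambda\cap X/X}(H),\mathcal O_X)=0$, so $h^0(E_\Lambda)=3$. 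Hence $\mathcal O_X^{\oplus3}\to E_\Lambda$ is, up to $\GL_3$, the evaluation map $H^0(E_\Lambda)\otimes\mathcal O_X\to E_\Lambda$; its kernel is $\mathcal O_X(-H)$, the inclusion $\mathcal O_X(-H)\hookrightarrow H^0(E_\Lambda)\otimes\mathcal O_X$ dualizes to the triple $(\ell_1,\ell_2,\ell_3)$, the degeneracy locus of this triple is $\Lambda\cap X$, and its linear span is $\Lambda$. Thus $E_\Lambda$ determines $\Lambda$, so $\sigma$ is injective; being also proper (as $F_{n-3}(D_1)$ is projective), it is finite onto its image.

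The heart of the proof is to show that $\sigma$ is étale, i.e.\ that $\Lambda\mapsto E_\Lambda$ identifies the first-order (indeed the formal) deformation theory of $\Lambda\subset D_1$ with that of $E_\Lambda$ on $X$. On the Fano side $T_\Lambda F_{n-3}(D_1)=H^0(\Lambda,N_{\Lambda/D_1})$, with obstructions in $H^1(\Lambda,N_{\Lambda/D_1})$, governed by $0\to N_{\Lambda/D_1}\to N_{\Lambda/\PP^n}=\mathcal O_\Lambda(1)^{\oplus3}\to\mathcal O_\Lambda(d_1)\to0$. On the moduli side the tangent and obstruction spaces are $\Ext^1_X(E_\Lambda,E_\Lambda)$ and the trace-free $\Ext^2_X(E_\Lambda,E_\Lambda)_0$ (here $H^1(\mathcal O_X)=0$, so in degree $1$ there is no difference between the fixed- and varying-determinant deformation spaces). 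I would compute these by applying $\Hom_X(-,K_\Lambda)$ to $0\to K_\Lambda\to\mathcal O_X^{\oplus3}\to I_{\Lambda\cap X/X}(H)\to0$ and then $0\to I_{\Lambda\cap X/X}(H)\to\mathcal O_X(H)\to\mathcal O_{\Lambda\cap X}\to0$: the vanishings $H^0(K_\Lambda)=H^1(K_\Lambda)=0$ together with a battery of twisted-cohomology vanishings on $X$ — valid precisely because the $d_i$, and in particular $d_1$, are large (this is where the extra hypotheses $d_1\ge4$, $d_2\ge6$ when $n=4$ are spent) — collapse these long exact sequences, identifying $\Ext^1_X(E_\Lambda,E_\Lambda)$ with $H^0(N_{\Lambda/D_1})$ compatibly with $d\sigma$ and matching the obstruction maps as well. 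This cohomological bookkeeping is the step I expect to be the main obstacle, both because it is the most delicate and because it is what pins down the precise numerical hypotheses on the $d_i$.

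Granting étaleness, the conclusion is formal: an étale, universally injective morphism is an open immersion, so $\sigma$ is an open immersion; being proper, $\sigma(F_{n-3}(D_1))$ is also closed, hence a union of connected components of $M_X(2,H,(d_1-1)e)$. Therefore each connected component $W\subset F_{n-3}(D_1)$ is carried isomorphically onto an open-and-closed connected subset of the moduli space — that is, onto a connected component of $M_X(2,H,(d_1-1)e)$ of the same dimension as $W$ — and since $\sigma$ is injective, distinct (hence disjoint) components of $F_{n-3}(D_1)$ map to disjoint, hence distinct, connected components of $M_X(2,H,(d_1-1)e)$. This is exactly the assertion of the theorem.
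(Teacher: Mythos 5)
Your construction of the bundles (the dual of the syzygy bundle of the three linear forms cutting out the $(n-3)$-plane, which agrees via liaison with the paper's Serre-correspondence extension $0\to\OO_X\to\cE\to I_Z(1)\to 0$ built from the residual scheme $Z$), the stability check, and the reconstruction of the plane from $E_\Lambda$ via $h^0(E_\Lambda)=3$ are essentially sound. One caveat: your assertion that $H^1\bigl(X,I_{\Lambda\cap X/X}(k)\bigr)=0$ for \emph{all} $k$ is false (it fails for $k$ up to the Cayley--Bacharach socle degree); the vanishing you actually need, $\Ext^1_X(I_{\Lambda\cap X/X}(1),\OO_X)=H^1(X,I_{\Lambda\cap X/X}(d-n))^*=0$, holds only because $d-n$ exceeds the socle degree $d-d_1-n+2$ of a complete intersection of type $(1,1,1,d_2,\dots,d_{n-2})$, i.e.\ precisely because $d_1>2$. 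The fatal problem is your third step. The entire content of the theorem is that the image of each Fano component is a \emph{whole} connected component of $M_X(2,H,(d_1-1)e)$ --- that nothing else in the moduli space is connected to these bundles --- and you reduce this to an \'etaleness claim, namely $\Ext^1_X(E_\Lambda,E_\Lambda)\cong H^0(N_{\Phi/D_1})$ together with compatible obstruction maps, which you explicitly do not carry out. That is not a routine computation to be deferred; it is the theorem.

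Worse, \'etaleness is very likely strictly stronger than what is true. The statement (and the paper's proof, which pointedly endows the component $N$ with its reduced induced structure) concerns only the dimension of the underlying reduced component; in this small-$c_2$ regime $\ext^2(\cE,\cE)=\hom(\cE,\cE(d-n-1))$ is large, and there is no reason for the moduli space to be reduced or for $\ext^1(\cE,\cE)$ to equal $h^0(N_{\Phi/D_1})$. For instance, the isolated lines of Example \ref{ex:many_components} have $h^0(N_{\ell/D_1})=0$ but $h^1(N_{\ell/D_1})=d_1-5>0$; if the corresponding moduli point is non-reduced, your $\sigma$ is not \'etale, its image need not be open, and you lose all control over the component of $M_X$ containing $E_\Lambda$. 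The paper avoids this entirely by arguing in the reverse direction: since $h^1(X,\cE)=0$, semicontinuity forces $h^0(X,\cE')\ge 3$ for every $\cE'$ in the component containing $\cE$; a section of $\cE'$ has torsion-free cokernel $I_{Z'}(1)$, and a B\'ezout/Cayley--Bacharach analysis of $Z'$ shows that $\cE'$ arises from the same construction applied to some $\Phi'\in F_{n-3}(D_1)$, producing a morphism from (a $\PP^2$-bundle over) the moduli component back to the Fano scheme. That reverse morphism, not a tangent-space computation, is what pins down the components, and it is the piece your outline is missing.
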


In \S \ref{sec:examples}, we give two families of examples of $D_1 \subset \PP^4$. In Example \ref{ex:many_components}, $F_1(D_1)$ has $3d_1^2$ zero-dimensional connected components.   In Example \ref{ex:many_components3}, $F_1(D_1)$ has $d_1$ one-dimensional connected components. Example \ref{ex:many_components2} gives examples of $D_1 \subset \PP^5$ where $F_2(D_1)$ has  $15 d_1^3$ zero-dimensional connected components. We conclude the following.

\begin{corollary}
For any integer $k$, there exists a smooth surface $X$ and Chern character $\bv$ on $X$ such that $M_X(\bv)$ has at least $k$ connected components.
\end{corollary}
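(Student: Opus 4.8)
The plan is to deduce the Corollary directly from the Main Theorem (\ref{thm:main_thm}) together with the explicit constructions promised in \S\ref{sec:examples}. Given an integer $k$, it suffices to exhibit one surface $X$ arising from the theorem's setup whose associated Fano scheme $F_{n-3}(D_1)$ has at least $k$ connected components; the theorem then produces an equal number of connected components in $M_X(2,H,(d_1-1)e)$, and we take $\bv$ to be the Chern character of a rank two bundle with $c_1 = H$ and $c_2 = (d_1-1)e$ (on a surface, $\bv = (2, H, \tfrac12 H^2 - (d_1-1)e)$).

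Concretely I would specialize to $n = 4$, so that $e = d_2$. By Example \ref{ex:many_components}, for each $d_1 \geq 4$ there is a hypersurface $D_1 \subset \PP^4$ of degree $d_1$ containing a line, with singular locus of codimension $5$ (hence $D_1$ smooth, as $\dim D_1 = 3$), such that $F_1(D_1)$ has $3 d_1^2$ zero-dimensional connected components. Choose $d_1 \geq 5$ large enough that $3 d_1^2 \geq k$, put $d_2 = d_1 + 1 \geq 6$, and let $D_2 \subset \PP^4$ be very general of degree $d_2$. Then the numerical hypotheses of (\ref{thm:main_thm})---namely $2 < d_1 < d_2$, $d_1 \geq 4$, $d_2 \geq 6$---all hold, so $X = D_1 \cap D_2$ is a smooth projective surface of Picard rank one, and $M_X(2,H,(d_1-1)d_2)$ has at least $3 d_1^2 \geq k$ connected components. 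This proves the Corollary.

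The argument has no real obstacle once the Main Theorem and the examples are in hand: the only thing to verify is that the counting bound $3 d_1^2 \geq k$ can be met simultaneously with the arithmetic constraints on $d_1$ and $d_2$, which is clear since $3 d_1^2 \to \infty$. Alternatively, one can invoke Example \ref{ex:many_components2} in $\PP^5$, where $F_2(D_1)$ has $15 d_1^3$ connected components, or Example \ref{ex:many_components3}, where $F_1(D_1)$ has $d_1$ one-dimensional connected components, to obtain surfaces whose moduli spaces have arbitrarily many positive-dimensional components; the deduction is word for word the same. The substantive content---that distinct components of the Fano scheme yield distinct components of the moduli space of the same dimension---is precisely (\ref{thm:main_thm}), which we are entitled to assume here, and the remaining input is precisely the existence of the hypersurfaces $D_1$ with the claimed Fano schemes, which will be established in \S\ref{sec:examples}.
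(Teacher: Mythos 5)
Your proposal is correct and is exactly the paper's (implicit) argument: the corollary is deduced by combining Theorem \ref{thm:main_thm} with any one of Examples \ref{ex:many_components}, \ref{ex:many_components2}, or \ref{ex:many_components3}, letting $d_1$ grow so that the number of components of the Fano scheme exceeds $k$. The only nitpick is that Example \ref{ex:many_components} is stated for $d_1 \geq 6$ (not $d_1 \geq 4$ or $5$ as you write), but since you take $d_1$ large this is immaterial.
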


The original motivation for this note was a question about monotonicity of Betti numbers of $M_X(r,c_1,c_2)$. Coskun and Woolf \cite{CoskunWoolf} conjecture that as $c_2$ tends to infinity, the Betti numbers of $M_X(r, c_1, c_2)$ stabilize.  G\"{o}ttsche proved that the  Betti numbers of the Hilbert scheme of $n$ points $X^{[n]}$ stabilize as $n$ tends to infinity \cite{Gottsche}. In fact, the Betti numbers monotonically increase to the stable value as $n$ increases. The same monotonicity occurs for certain moduli spaces of higher rank sheaves on K3 surfaces and rational surfaces. In talks given by the first author, several mathematicians raised the question whether one should expect the Betti numbers to always  monotonically increase to the stable value. As examples of disconnected moduli spaces show, this is already false for $b_0$.

\begin{corollary}
The Betti numbers of the moduli spaces of sheaves $M_X(r, c_1, c_2)$  do not monotonically increase as $c_2$ increases.
\end{corollary}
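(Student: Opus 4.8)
The plan is to derive the failure of monotonicity as an immediate consequence of the previous corollary together with O'Grady's asymptotic irreducibility theorem, so the argument itself is short. First I would apply the previous corollary with $k=2$ to produce a smooth complete intersection surface $X$ as in the Theorem --- of Picard rank one, with $H$ the ample generator --- and a Chern character of the form $\bv = (2, H, c_2^{\circ})$ with $c_2^{\circ} = (d_1-1)e$, such that $M_X(\bv) = M_X(2, H, c_2^{\circ})$ has at least two connected components. Since $\Pic(X) = \ZZ H$ and $c_1 = H$, any rank two semistable sheaf on $X$ is in fact stable, so $M_X(2, H, c_2)$ is a projective moduli space of stable sheaves for every $c_2$; in particular its zeroth Betti number $b_0$ is the number of connected components, which here is at least $2$.

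Next I would fix $X$, $r = 2$ and $c_1 = H$, and let $c_2$ vary over the integers $\geq c_2^{\circ}$. By \cite{OGrady} there is a constant $N = N(X,H)$ such that $M_X(2, H, c_2)$ is nonempty, reduced and irreducible --- hence connected, with $b_0 = 1$ --- for all $c_2 \geq N$. Note $c_2^{\circ} < N$: otherwise the Theorem would have produced a disconnected $M_X(2, H, c_2^{\circ})$ with $c_2^{\circ} \geq N$, contradicting \cite{OGrady}. Choosing any $c_2^{\bullet} \geq N$ one then has $c_2^{\circ} < c_2^{\bullet}$ and
\[ b_0(M_X(2, H, c_2^{\circ})) \geq 2 > 1 = b_0(M_X(2, H, c_2^{\bullet})), \]
so $b_0$, and therefore the full collection of Betti numbers, is not monotonically increasing in $c_2$. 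Running the same argument with $k$ arbitrarily large shows, in addition, that $b_0$ can drop from an arbitrarily large value down to $1$ as $c_2$ increases.

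The deduction presents no real obstacle; the only point requiring care is the compatibility of the two inputs. One must check that O'Grady's theorem applies to this particular polarized surface --- it does, since that theorem holds for an arbitrary smooth projective surface with an arbitrary polarization, and here $X$ is smooth and $H$ is ample --- and that the moduli spaces compared at $c_2^{\circ}$ and at $c_2^{\bullet}$ are of the same type, namely Gieseker moduli of semistable sheaves of rank $2$ with $c_1 = H$ and the indicated $c_2$. This is automatic, and the components furnished by the Theorem are genuine connected components of $M_X(2, H, c_2^{\circ})$, so they contribute to $b_0$. No computation beyond this bookkeeping is needed.
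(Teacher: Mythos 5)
Your argument is exactly the one the paper intends: the corollary follows by combining the disconnected examples (which force $b_0\geq 2$ for some $c_2$) with O'Grady's theorem that $M_X(2,H,c_2)$ is irreducible, hence has $b_0=1$, for $c_2\gg 0$; the paper compresses this into the remark that monotonicity ``is already false for $b_0$.'' Your additional bookkeeping (stability versus semistability, applicability of O'Grady to this polarized surface) is correct and harmless.
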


One could refine the question and ask whether the Betti numbers monotonically increase once $c_2$ reaches the O'Grady bound and the moduli spaces become reduced and  irreducible  of the expected dimension.  

\subsection*{Organization of the paper} In \S \ref{sec:preliminaries}, we recall basic facts about moduli spaces and complete intersections. In \S \ref{sec:complete_intersections}, we describe our main construction and prove Theorem \ref{thm:main_thm}. In \S \ref{sec:examples}, we give examples that illustrate interesting phenomena.

\subsection*{Acknowledgments} We would like to thank Lawrence Ein, Joe Harris, Daniel Huybrechts,  Julius Ross, Dennis Tseng, Matthew Woolf and Kota Yoshioka for valuable conversations. We thank the organizers and participants of ZAG, Rahul Pandharipande, and the participants of the Algebraic Geometry and Moduli seminar for stimulating discussions that led to this note.

\section{Preliminaries}\label{sec:preliminaries}
In this section, we recall basic facts concerning moduli spaces of sheaves (see  \cite{HuybrechtsLehn, MestranoSimpson2} for further details) and complete intersections. 

\subsection*{Moduli spaces of sheaves} Let $(X,H)$ be a smooth, polarized, complex projective variety of dimension $n$. If $\cF$ is a torsion-free coherent sheaf on $X$ of rank $\rk(\cF)$, then the Hilbert polynomial $P_{\cF}(m)$ and the reduced Hilbert polynomial $p_{\cF}(m)$ of $\cF$ are defined  by
\[
P_{\cF}(m) = \chi(\cF(mH)) = \rk(\cF) \frac{m^n}{n!} + \text{l.o.t.} \quad \mbox{and} \quad p_{\cF}(m) = \frac{P_{\cF}(m)}{\rk(\cF)}.
\]
The sheaf $\cF$ is called \emph{(semi)stable} if for every proper subsheaf $\cF' \subset \cF$, we have $p_{\cF'}(m) \leqor p_{\cF}(m)$ for $m \gg 0$. Gieseker and Maruyama constructed projective moduli spaces $M_X(\bv)$ parameterizing S-equivalence classes of semistable sheaves on $X$ with Chern character $\bv$ \cite{Gieseker,Maruyama}. The \emph{$H$-slope} of $\cF$ is defined to be the number
\[
\mu_H(\cF) = \frac{c_1(\cF)\cdot H^{n-1}}{\rk(\cF)H^n}.
\]
If $\mu_H(\cF') < \mu_H(\cF)$ for all proper subsheaves $\cF' \subset \cF$, then $\cF$ is called {\em $\mu_H$-stable}.

\subsection*{The Serre correspondence} Let $X$ be a smooth projective surface. Let $Z\subset X$ be a zero-dimensional Gorenstein subscheme of length $n$. Then $Z$ satisfies the {\em Cayley-Bacharach property} for a line bundle $\mathcal{L}$ on $X$ if for any subscheme $W \subset Z$ of length $n-1$, any section of $\mathcal{L}$ vanishing on $W$ vanishes on all of $Z$. Given a line bundle $\mathcal{L}$ and a zero-dimensional scheme $Z$ satisfying the Cayley-Bacharach property for $\omega_X\te \cL$, the Serre correspondence constructs a  vector bundle of rank $2$ on $X$.

\begin{theorem}[The Serre Correspondence \protect{\cite[5.1.1]{HuybrechtsLehn}}]
  Let $X$ be a smooth projective surface, and $Z \subset X$  be a local complete intersection subscheme of dimension $0$ and length $n$. Let $\mathcal{L}$ be a line bundle on $X$. Then there exists an extension
  \[
    0 \to \OO_X \to \mathcal{E} \to \mathcal{L}\otimes I_Z \to 0
  \]
  with $\mathcal{E}$ locally free if and only if $Z$ satisfies the Cayley-Bacharach property for $\omega_X \otimes \mathcal{L}$.
  \end{theorem}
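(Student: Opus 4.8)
The plan is to classify extensions by $\Ext^1(\mathcal{L}\otimes I_Z,\OO_X)$, to reduce local freeness of the middle term to a pointwise generation condition on the extension class, and then to translate that condition into Cayley--Bacharach by means of duality. Extensions $0\to\OO_X\to\cE\to\mathcal{L}\otimes I_Z\to 0$ are parametrized by $\Ext^1(\mathcal{L}\otimes I_Z,\OO_X)$, and for a class $e$ the question of whether $\cE$ is locally free is local on $X$: away from $Z$ the quotient $\mathcal{L}\otimes I_Z$ is already locally free, so the only issue is at the points of $Z$. Thus the first task is a local criterion: $\cE$ is locally free if and only if the image $\bar e$ of $e$ in $H^0(Z,\mathcal{E}xt^1(\mathcal{L}\otimes I_Z,\OO_X))$ is a generator of this sheaf at every point of $Z$, i.e. a nowhere-vanishing section. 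I would prove this using the l.c.i. structure: near a point of $Z$ the ideal is generated by a regular sequence, the Koszul complex computes the local Ext, and the Koszul extension is the ``tautological'' one producing a free module; a general extension is locally free precisely when its class is a unit multiple of this generator.

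Next I would compute the relevant Ext sheaves from the sequence $0\to\mathcal{L}\otimes I_Z\to\mathcal{L}\to\mathcal{L}\otimes\OO_Z\to 0$. Applying $\sHom(-,\OO_X)$ and using that $\mathcal{L}$ is locally free while $\mathcal{L}\otimes\OO_Z$ is supported in codimension $2$, one gets $\mathcal{E}xt^0(\mathcal{L}\otimes I_Z,\OO_X)=\mathcal{L}^{-1}$, vanishing in degree $1$ off $Z$, and the isomorphism $\mathcal{E}xt^1(\mathcal{L}\otimes I_Z,\OO_X)\cong\mathcal{E}xt^2(\mathcal{L}\otimes\OO_Z,\OO_X)$. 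Since $Z$ is l.c.i.\ hence Gorenstein of dimension $0$, local duality gives $\mathcal{E}xt^2(\OO_Z,\OO_X)=\omega_Z\otimes\omega_X^{-1}$ with $\omega_Z$ an invertible $\OO_Z$-module, so $\mathcal N:=\mathcal{E}xt^1(\mathcal{L}\otimes I_Z,\OO_X)\cong\omega_Z\otimes M^{-1}=\sHom_{\OO_Z}(M,\omega_Z)$, where $M:=(\omega_X\otimes\mathcal{L})|_Z$. The local-to-global spectral sequence then yields the five-term sequence $0\to H^1(X,\mathcal{L}^{-1})\to\Ext^1(\mathcal{L}\otimes I_Z,\OO_X)\to H^0(Z,\mathcal N)\xrightarrow{\ \delta\ }H^2(X,\mathcal{L}^{-1})$, so the classes $\bar e\in H^0(Z,\mathcal N)$ arising from global extensions are exactly $\ker\delta$. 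It therefore remains to show that $\ker\delta$ contains a nowhere-vanishing section if and only if $Z$ satisfies Cayley--Bacharach for $\omega_X\otimes\mathcal{L}$.

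This last equivalence is the heart of the argument. By Serre duality on $X$ one has $H^2(X,\mathcal{L}^{-1})\cong H^0(X,\omega_X\otimes\mathcal{L})^\ast$, and by Grothendieck--Serre duality for the finite scheme $Z$ one has $H^0(Z,\mathcal N)\cong H^0(Z,M)^\ast$; under these identifications $\delta$ becomes the transpose of the restriction map $\rho\colon H^0(X,\omega_X\otimes\mathcal{L})\to H^0(Z,M)$, so that $\ker\delta=(\operatorname{im}\rho)^\perp$. A section $\phi\in H^0(Z,\mathcal N)=\sHom_{\OO_Z}(M,\omega_Z)$ is nowhere vanishing, i.e.\ an isomorphism $M\xrightarrow{\sim}\omega_Z$, exactly when the associated functional $\operatorname{tr}\circ\phi$ is nonzero on the socle $\operatorname{soc}(M_z)$ for every $z\in Z$, because for a Gorenstein Artinian local ring the trace pairs nontrivially precisely with the one-dimensional socle. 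Finally I would identify colength-one subschemes $W\subset Z$ with the one-dimensional ideals of $\OO_Z$, which (using that $\OO_Z=\prod_z\OO_{Z,z}$ and each factor is Gorenstein) are exactly the socle lines $\operatorname{soc}(\OO_{Z,z})$ concentrated at a single point; the sections of $\omega_X\otimes\mathcal{L}$ vanishing on such a $W$ correspond to $\operatorname{im}\rho\cap\operatorname{soc}(M_z)$. Hence Cayley--Bacharach says $\operatorname{soc}(M_z)\not\subseteq\operatorname{im}\rho$ for all $z$, and over an infinite field a functional in $(\operatorname{im}\rho)^\perp$ nonzero on every socle line exists if and only if no socle line lies in $\operatorname{im}\rho$; combining with the local freeness criterion proves both directions.

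I expect the main obstacle to be precisely the socle bookkeeping in the final step: one must identify colength-one subschemes with single-point socle lines (which is exactly where the Gorenstein, equivalently l.c.i., hypothesis is indispensable), and one must dualize $\delta$ correctly so that the ``nowhere-vanishing class in $\ker\delta$'' condition matches Cayley--Bacharach on the nose. A naive count treating arbitrary socle lines as subschemes overcounts and breaks the equivalence, so care with the ideal-theoretic description of $\OO_Z$ is essential.
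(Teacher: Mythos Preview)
The paper does not prove this statement: it is quoted verbatim from \cite[5.1.1]{HuybrechtsLehn} and used as a black box. There is therefore no ``paper's own proof'' to compare against.

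Your outline is essentially the standard argument from \cite{HuybrechtsLehn} and is correct. The key steps---classifying extensions by $\Ext^1$, reducing local freeness to the image of the class in $H^0(\mathcal{E}xt^1(\mathcal{L}\otimes I_Z,\OO_X))$ being nowhere vanishing, computing $\mathcal{E}xt^1(\mathcal{L}\otimes I_Z,\OO_X)\cong\mathcal{E}xt^2(\OO_Z,\omega_X)\otimes(\omega_X\otimes\mathcal{L})^{-1}$, and dualizing the edge map $\delta$ to the restriction $H^0(\omega_X\otimes\mathcal{L})\to H^0(\omega_X\otimes\mathcal{L}|_Z)$---are all sound. Your identification of colength-one subschemes of $Z$ with the socle lines at the individual points, and the resulting translation of Cayley--Bacharach into ``no socle line lies in $\operatorname{im}\rho$'', is exactly the place where the Gorenstein hypothesis enters and you handle it correctly. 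The final avoidance argument (over an infinite field a vector space is not a finite union of proper subspaces) is the right way to conclude existence of a nowhere-vanishing class in $(\operatorname{im}\rho)^\perp$.
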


\subsection*{The Cayley-Bacharach Theorem} For our construction, we will use the following  Cayley-Bacharach theorem for projective space  (see \cite[Theorem CB7]{EisenbudGreenHarris})\footnote{The statement of Theorem CB7 in \emph{op. cit.} should say divisors instead of curves.}.
\begin{theorem}[Cayley-Bacharach]\label{thm:cayley_bacharach}
  Let $D_1, D_2, \dots, D_n \subset \PP^n$ be divisors of degrees $d_1, d_2, \dots, d_n$ respectively, meeting in a zero-dimensional scheme $Z = D_1 \cap D_2 \cap \cdots \cap D_n$. Set $d=\sum_{i=1}^{n}d_i$.
  \begin{enumerate}
  \item If $D \subset \PP^n$ is any divisor of degree $d - n -1$ containing a subscheme $Z'$ of $Z$ of length one less than that of  $Z$, then $D$ contains all of $Z$.
  \item Suppose $Z',Z''$ are residual subschemes of $Z$. Then for any $m \leq d - n-1$, we have
    \[
h^0(\PP^n, I_{Z'}(m))-h^0(\PP^n, I_Z(m)) = h^1\left(\PP^n,I_{Z''}\left(d-n-1-m\right)\right).
      \]
        \end{enumerate}

\end{theorem}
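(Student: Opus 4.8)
The plan is to reduce both statements to a single identity about Hilbert functions and to prove that identity using the Gorenstein self-duality of the complete intersection $Z$. Write $S = \CC[x_0,\dots,x_n]$, let $A = S/(f_1,\dots,f_n)$ be the homogeneous coordinate ring of $Z$, where $f_i$ is the equation of $D_i$, and set $k = d-n-1$. For any zero-dimensional $W \subset \PP^n$ the sequence $0 \to I_W(j) \to \OO_{\PP^n}(j) \to \OO_W(j) \to 0$ together with $H^1(\OO_{\PP^n}(j))=0$ gives $h^0(I_W(j)) = h^0(\OO(j)) - h_W(j)$ and $h^1(I_W(j)) = \length(W) - h_W(j)$, where $h_W(j)$ is the rank of the restriction map $H^0(\OO(j)) \to H^0(\OO_W)$. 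Substituting these into statement (2), the terms $h^0(\OO(m))$ cancel and the claim becomes
\[
 h_Z(m) - h_{Z'}(m) = \length(Z'') - h_{Z''}(k-m).
\]
Since $h_Z(m) - h_{Z'}(m) = \dim A_m - \dim (A/I_{Z'/Z})_m = \dim (I_{Z'/Z})_m$, where $I_{Z'/Z} \subset A$ is the ideal of $Z'$ inside $Z$, this is in turn equivalent to the compact identity $\dim (I_{Z'/Z})_m = h^1(I_{Z''}(k-m))$. Statement (1) is then the special case $m=k$: the residual of a subscheme $Z'$ of length $\length(Z)-1$ is a single reduced point $Z''$, for which $h^1(I_{Z''}(0)) = \length(Z'') - h_{Z''}(0) = 1-1 = 0$, so $\dim (I_{Z'/Z})_k = 0$ and every degree-$k$ form through $Z'$ already passes through all of $Z$.

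Next I would assemble the algebraic input. The Koszul complex on the regular sequence $f_1,\dots,f_n$ is a self-dual free resolution of $A$, so $A$ is arithmetically Gorenstein and $\omega_A = \Ext^n_S(A,\omega_S) \cong A(k)$, i.e. $\omega_Z \cong \OO_Z(k)$; this is the one place the complete intersection hypothesis enters. By definition the residual subschemes $Z', Z''$ satisfy $I_{Z'/Z} = (0 :_A I_{Z''/Z})$, and a graded homomorphism $A/I_{Z''/Z} \to A$ is exactly an element of this annihilator, so $I_{Z'/Z} \cong \Hom_A(A_{Z''}, A)$ canonically as graded modules, where $A_{Z''} = A/I_{Z''/Z}$. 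Using $A \cong \omega_A(-k)$ this becomes $I_{Z'/Z} \cong \Hom_A(A_{Z''}, \omega_A)(-k) = \omega_{A_{Z''}}(-k)$, identifying the ideal of $Z'$ in $Z$ with a twist of the canonical module of the coordinate ring of $Z''$.

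Finally I would invoke duality to compute the graded dimensions. Graded local duality for the one-dimensional ring $A_{Z''}$ gives $\omega_{A_{Z''}} \cong H^1_{\mathfrak m}(A_{Z''})^\vee$ (graded Matlis dual), hence $\dim (\omega_{A_{Z''}})_{m-k} = \dim H^1_{\mathfrak m}(A_{Z''})_{k-m}$; and the standard comparison between local and sheaf cohomology gives $H^1_{\mathfrak m}(A_{Z''})_j \cong H^1(I_{Z''}(j))$ for all $j$. Chaining these isomorphisms yields
\[
\dim (I_{Z'/Z})_m = \dim(\omega_{A_{Z''}})_{m-k} = h^1(I_{Z''}(k-m)),
\]
which is the identity above and completes part (2); part (1) follows as explained. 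The main obstacle is the bookkeeping in the middle step: one must pin down the definition of residual subschemes in the Gorenstein setting, verify that the annihilator $(0:_A I_{Z''/Z})$ is computed by $\Hom_A(A_{Z''},A)$ with no grading shift, and then track the twist by $k=d-n-1$ consistently through the Koszul self-duality, the canonical-module identification, and the degree convention in the Matlis dual. Everything else is formal once $\omega_Z \cong \OO_Z(k)$ is in hand.
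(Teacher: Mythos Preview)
The paper does not prove this theorem; it is quoted from Eisenbud--Green--Harris \cite[Theorem CB7]{EisenbudGreenHarris} and used as a black box. Your outline is the standard Gorenstein-duality proof, which is essentially the argument in that reference: reduce to the identity $\dim(I_{Z'/Z})_m = h^1(\PP^n,I_{Z''}(k-m))$, identify $I_{Z'/Z}\cong \omega_{A_{Z''}}(-k)$ via the Koszul self-duality $\omega_A\cong A(k)$, and read off graded pieces by local duality. The deduction of (1) from the case $m=k$ is correct once one checks that in a zero-dimensional Gorenstein scheme the residual of a colength-one subscheme has length one (locally, a length-one ideal is the socle, whose annihilator is the maximal ideal). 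The only genuine care needed is what you already flag: that the relevant ideals are saturated so that $H^0_{\mathfrak m}(A_{Z''})=0$ and the comparison $H^1_{\mathfrak m}(A_{Z''})_j \cong H^1(\PP^n,I_{Z''}(j))$ holds without correction terms.
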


\subsection*{Complete intersections} Finally, we will use the following standard facts about complete intersections and their Fano schemes of linear spaces. Given a variety $X$, let $F_j(X)$ denote the Fano scheme parameterizing $j$-dimensional linear spaces contained in $X$.

\begin{proposition}\label{prop:Fano}
\begin{enumerate}
\item If $D \subset \PP^4$ is a smooth hypersurface of degree $d \geq 4$, then $\dim(F_1(D)) \leq 1$. If $d \geq 6$ is and $D$ is general, then $F_1(D)$ is empty. 
\item If $D \subset \PP^5$ is a smooth hypersurface of degree $d \geq 3$, then $\dim(F_2(D)) \leq 0$. If $D$ is general, then $F_2(D)$ is empty.
\item Let $n \geq 5$. If $D \subset \PP^n$ is a hypersurface of degree $d \geq 3$ which is smooth in codimension 4, then $\dim(F_{n-3}(D)) \leq 0$.
\end{enumerate}
\end{proposition}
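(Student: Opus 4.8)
The plan is to deduce all three statements from one computation: the normal bundle of a linear space $\Lambda\cong\PP^k$ contained in a degree $d$ hypersurface $D=V(F)\subset\PP^n$, in the three cases $k=1$, $k=2$, $k=n-3$. The point to exploit is that in every case $\Lambda$ has codimension exactly $3$, so that $N_{\Lambda/\PP^n}\cong\OO_\Lambda(1)^{\oplus 3}$ and $N_{D/\PP^n}|_\Lambda\cong\OO_\Lambda(d)$, and the normal bundle sequence reads
\[
0\to N_{\Lambda/D}\to \OO_\Lambda(1)^{\oplus 3}\xrightarrow{(g_1,g_2,g_3)}\OO_\Lambda(d),
\]
where, after choosing coordinates with $\Lambda=V(x_0,x_1,x_2)$ and writing $F=x_0G_0+x_1G_1+x_2G_2$, the forms $g_i:=G_{i-1}|_\Lambda$ have degree $d-1$ and cut out $\operatorname{Sing}(D)\cap\Lambda$ inside $\Lambda$. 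Since the Zariski tangent space to $F_k(D)$ at $[\Lambda]$ is $H^0(N_{\Lambda/D})$, which is exactly the space of linear syzygies of $(g_1,g_2,g_3)$, i.e.\ $\{(l_1,l_2,l_3)\in H^0(\OO_\Lambda(1))^{\oplus 3}:\sum g_il_i=0\}$, every dimension bound will follow from bounding this syzygy space; note $\deg g_i=d-1\ge 2$ in all three cases.

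For part (3) and the bound in part (2) I would argue as follows. In (2) the hypersurface is smooth, so $\operatorname{Sing}(D)\cap\Lambda=\varnothing$; in (3), "smooth in codimension $4$" forces $\dim\operatorname{Sing}(D)\le\dim D-5$, so $\operatorname{Sing}(D)\cap\Lambda$ has codimension $\ge 3$ in $\Lambda\cong\PP^{n-3}$ (this uses $n\ge 5$). In either case the ideal $(g_1,g_2,g_3)$ has height $3$, hence $g_1,g_2,g_3$ is a regular sequence, so the syzygy module is generated by the Koszul relations, whose entries all have degree $d-1\ge 2$. Therefore there are no linear syzygies, $H^0(N_{\Lambda/D})=0$, and $F_k(D)$ is a finite reduced scheme; in particular $\dim F_2(D)\le 0$ and $\dim F_{n-3}(D)\le 0$.

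For the genericity (emptiness) statements in (1) and (2) I would run the usual incidence variety count. Let $\Phi=\{(\Lambda,D):\Lambda\subset D\}\subset\Gr(k,\PP^n)\times|\OO_{\PP^n}(d)|$. Over a fixed $\Lambda$ the fibre is the linear system of hypersurfaces through $\Lambda$, of codimension $h^0(\OO_\Lambda(d))=\binom{d+k}{k}$ in $|\OO_{\PP^n}(d)|$, so the second projection cannot dominate once $\binom{d+k}{k}>\dim\Gr(k,\PP^n)$. This inequality is $d+1>6$ when $(k,n)=(1,4)$ and $\binom{d+2}{2}>9$ when $(k,n)=(2,5)$, i.e.\ $d\ge 6$, resp.\ $d\ge 3$, which is what is claimed.

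The hard part is the upper bound $\dim F_1(D)\le 1$ in (1) for all smooth $D\subset\PP^4$ with $d\ge 4$; here the naive bound is too weak by one. The bundle $N_{\ell/D}$ is a rank two, degree $3-d$ subbundle of $\OO_\ell(1)^{\oplus 3}$ on $\ell\cong\PP^1$, hence $N_{\ell/D}\cong\OO_\ell(a)\oplus\OO_\ell(b)$ with $b\le a\le 1$ and $a+b=3-d$; this already gives $h^0(N_{\ell/D})\le 2$, so $\dim F_1(D)\le 2$, with $h^0=2$ only when $N_{\ell/D}\cong\OO_\ell(1)\oplus\OO_\ell(2-d)$. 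So a hypothetical two-dimensional component $Z$ of $F_1(D)$ would have $N_{\ell/D}\cong\OO_\ell(1)\oplus\OO_\ell(2-d)$ for general $[\ell]\in Z$. The destabilizing subbundle $\OO_\ell(1)\hookrightarrow N_{\ell/D}\hookrightarrow N_{\ell/\PP^4}$ is then a constant sub-line-bundle of $\OO_\ell(1)^{\oplus 3}$, hence corresponds to a unique plane $\Pi_\ell$ with $\ell\subset\Pi_\ell\subset\PP^4$ and $N_{\ell/\Pi_\ell}\subseteq N_{\ell/D}$; unwinding this containment shows the first-order deformations of $\ell$ inside $\Pi_\ell$ remain in $D$, which forces $\ell$ to be a multiple component of the plane curve $D\cap\Pi_\ell$ (equivalently $\Pi_\ell\subseteq T_qD$ for general $q\in\ell$, equivalently the Gauss image $\gamma(\ell)\subset(\PP^4)^\vee$ spans only a line). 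To finish I would bound the locus of such lines: using that $\Pi_\ell$ is uniquely determined by $\ell$ and that a plane curve has only finitely many multiple line components, a dimension count on the incidence $\{(\ell,\Pi):\ell\subset\Pi,\ \Pi\subseteq T_qD\ \text{for all}\ q\in\ell\}$ should force this locus to be at most one-dimensional, contradicting $\dim Z=2$. \textbf{Getting this last estimate clean is the main obstacle}; failing a direct argument, one can instead cite the classical analysis of lines on smooth threefolds of degree $\ge 4$ in $\PP^4$, where $\dim F_1(D)\le 1$ is known (and $F_1(D)=\varnothing$ for general $D$ of degree $\ge 6$, recovering the last assertion of (1) independently).
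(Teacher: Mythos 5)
Your treatment of parts (2) and (3) is correct and essentially coincides with the paper's: the paper proves (2) by exactly your syzygy argument (a section of $N_{\Lambda/D}$ is a linear syzygy of the three degree-$(d-1)$ forms, which is impossible when they form a regular sequence, since the syzygy module is then Koszul-generated in degree $d-1\ge 2$), and then deduces (3) from (2) by slicing with a general $\PP^5$; your direct verification that ``smooth in codimension $4$'' still forces $(g_1,g_2,g_3)$ to have height $3$ along $\Lambda$ is a perfectly good substitute for the slicing. The incidence-variety counts for the genericity statements also match the paper's.

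The genuine gap is in part (1), and you have flagged it yourself. The tangent-space computation only yields $h^0(N_{\ell/D})\le 2$, hence $\dim F_1(D)\le 2$; your program for excluding a two-dimensional component (the distinguished plane $\Pi_\ell$ with $\ell$ a double component of $D\cap\Pi_\ell$, followed by an incidence count) is left unfinished, and the fallback of ``citing the classical analysis'' is circular, since $\dim F_1(D)\le 1$ is precisely the assertion to be proved. The paper closes this by a softer argument that never touches the tangent space: since $\deg N_{\ell/D}=3-d<0$, no line in $D$ is free, so in characteristic $0$ the evaluation map of the universal family over a component $Y$ of $F_1(D)$ cannot dominate the threefold $D$ (generic smoothness would make $N_{\ell/D}$ generically globally generated, hence of nonnegative degree). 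The lines of $Y$ therefore sweep out at most a surface $S$; if $\dim Y\ge 2$, a general point of $S$ lies on a positive-dimensional family of lines contained in $S$, so $S$ is a cone with vertex at its general point, i.e.\ a plane. But a smooth hypersurface of degree $>1$ in $\PP^4$ contains no plane, because $\Pic(D)=\ZZ H$ by the Lefschetz hyperplane theorem while a plane would be an effective divisor of degree $1$. If you prefer to salvage your route, the missing step is exactly the estimate that the locus of lines $\ell$ admitting a plane $\Pi\supset\ell$ with $\Pi\subset T_qD$ for all $q\in\ell$ is at most one-dimensional; the paper's sweeping argument is the cleaner way around it.
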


\begin{proof}
Let $D \subset \PP^4$ be a smooth hypersurface of degree $d$. Let $Y$ be a component of $F_1(D)$ containing a line $\ell$. If $d \geq 4$, then $\deg(N_{\ell/D}) = 3-d<0$. Hence, $\ell$ is not free and the lines parameterized by $Y$ can only sweep a surface. If $\dim(Y)\geq 2$, this surface must be a plane. However, by the Lefschetz hyperplane theorem, a smooth hypersurface of degree $d>1$ cannot contain a plane. We conclude that $\dim(F_1(D)) \leq 1$. A  dimension count shows that a general hypersurface in $\PP^n$ of degree $d \geq 2n-2$ does not contain lines. This proves (1).

Let $D \subset \PP^5$ be a smooth hypersurface of degree $d\geq 3$ that contains a plane $\Lambda$. Choosing coordinates, we may assume that $\Lambda$ is defined by $z_0=z_1=z_2=0$ and that the equation of the hypersurface is given by $z_0 F_0 + z_1 F_1 + z_2 F_2=0,$ where $F_i$ are polynomials of degree $d-1$.  For $1 \leq i \leq 3$, let $\overline{F}_i$ denote the restriction of  $F_i$ to $\Lambda$. If the $\overline{F}_i$  have a common zero on $\Lambda$, then $D$ is singular at that point. The standard normal bundle sequence for $N_{\Lambda/D}$ is given by $$0 \to N_{\Lambda/D} \to \OO_{\Lambda}(1)^3 \stackrel{(\overline{F_0},\overline{F_1},\overline{F_2})}{\longrightarrow} \OO_{\Lambda}(d) \to 0.$$ If $ N_{\Lambda/D}$ has a section, then the $\overline{F}_i$ must satisfy a relation of the form $\sum L_i \overline{F_i} =0$ for some linear forms $L_i$. Since the polynomials $\overline{F_i}$ have degree at least $2$, they must have a common solution, contradicting the smoothness of $D$. Therefore $H^0(\Lambda,  N_{\Lambda/D})=0$ and $\dim(F_2(D)) \leq 0$. Finally, a dimension count shows that for general $D$, $F_2(D)$ is empty. This proves (2), and (3) is an immediate consequence of (2) by intersecting with a general $\PP^5$. 
\end{proof}

\begin{proposition}\label{prop:no_positive_intersections}
Let $4 \leq d_1 \leq \cdots \leq d_{n-3}$ be a sequence of integers such that $\sum_{i=1}^{n-3} d_i > n+1$. Let $X \subset \PP^n$ be a very general complete intersection of type $d_1, \dots, d_{n-3}$. Then any codimension 3 linear space intersects $X$ in dimension 0.
\end{proposition}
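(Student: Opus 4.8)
The plan is to pass to an incidence variety and show that the ``bad'' locus is a proper subvariety of the parameter space. Write $V=\prod_{i=1}^{n-3}\PP H^0(\PP^n,\OO(d_i))$ for the space of tuples $(D_1,\dots,D_{n-3})$, write $G$ for the Grassmannian of codimension $3$ linear subspaces of $\PP^n$, so that $\dim G=3(n-2)$, and set
\[
\Gamma=\bigl\{((D_i),\Lambda)\in V\times G: \dim(\Lambda\cap X)\ge 1\bigr\},\qquad X=D_1\cap\cdots\cap D_{n-3}.
\]
By upper semicontinuity of fiber dimension $\Gamma$ is closed, and $G$ is proper, so it suffices to prove $\dim\Gamma<\dim V$; then the general, hence in particular the very general, complete intersection lies outside the image of $\Gamma$, and since $\Lambda\cap X$ is always nonempty of dimension $\ge 0$ (projective dimension theorem), such an $X$ meets every codimension $3$ linear space in dimension exactly $0$.

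Given a bad pair $((D_i),\Lambda)\in\Gamma$, the first step is to extract a low-complexity irreducible subvariety $W\subseteq X$. Pick a positive-dimensional component $W_0$ of $\Lambda\cap X$; since $\Lambda$ is linear, $\langle W_0\rangle\subseteq\Lambda$. Cutting $\langle W_0\rangle$ down greedily — repeatedly replacing the current irreducible $V$ (initially $\langle W_0\rangle$, always a component through $W_0$ of an intersection of some of the $D_i|_{\langle W_0\rangle}$) by a component through $W_0$ of $V\cap D_i|_{\langle W_0\rangle}$ for some $i$ with $D_i|_{\langle W_0\rangle}\not\supseteq V$, until no such $i$ remains — yields an irreducible $W\subseteq\Lambda\cap X$ that is nondegenerate in $\langle W\rangle\cong\PP^s$ with $s\le n-3$ and is a component of an intersection of exactly $s-\dim W$ of the hypersurfaces $D_i|_{\langle W\rangle}$ inside $\PP^s$. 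Since $W\subseteq X$ and $X$ spans $\PP^n$ for a general complete intersection, $W\subsetneq X$, so $e:=\dim W\in\{1,2\}$. If $W$ is a linear space then $s=e$, so $W$ — hence $X$ — contains a line; otherwise $e<s$ and $W$ is non-linear.

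The line case uses the degree hypothesis: $\{(D_i):X\supseteq\text{some line}\}$ is the image in $V$ of the incidence variety over $\Gr(1,\PP^n)$, a line imposes $d_i+1$ conditions on $D_i$, so this image has dimension at most $\dim V+2(n-1)-\sum_i(d_i+1)=\dim V+(n+1)-\sum_i d_i<\dim V$ precisely because $\sum_i d_i>n+1$. In the non-linear case, fix $e\in\{1,2\}$ and $s$ with $e<s\le n-3$; the corresponding locus $\mathcal Z_{s,e}$ is covered by the image in $V$ of the incidence variety over triples $(\langle W\rangle,\,(g_i)_{i\in S},\,W)$ with $\langle W\rangle\cong\PP^s$ in $\Gr(s,\PP^n)$, the $g_i$ for $i\in S$ (with $|S|=s-e$) general hypersurfaces of degree $d_i$ in $\PP^s$, and $W$ one of the finitely many components of $\bigcap_{i\in S}V(g_i)$. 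Counting the remaining free choices of the $D_i$ (those restricting to the $g_i$, and those containing $W$) and using that a nondegenerate $e$-dimensional variety in $\PP^s$ imposes at least $(s-e+1)d+1$ conditions on forms of degree $d$ — which follows from Castelnuovo's general-position lemma applied to iterated general hyperplane sections — gives
\[
\dim\mathcal Z_{s,e}\le\dim V+(s+1)(n-s)-(s-e)-\sum_{i\notin S}\bigl((s-e+1)d_i+1\bigr).
\]
A short computation using $d_i\ge 4$, $e\ge 1$, and $e<s\le n-3$ shows the correction term is negative in every case, the extreme case being $s=n-3$. As there are only finitely many pairs $(s,e)$ and finitely many Hilbert polynomials involved, the image of $\Gamma$ in $V$ is a finite union of proper subvarieties, as required.

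The main obstacle will be the bookkeeping in the non-linear case: pinning down the Castelnuovo-type lower bound on conditions imposed so that it is genuinely uniform, and checking that the degenerate sub-strata are harmless — namely the strata where $\langle W_0\rangle\subseteq D_i$ for some $i$, and those where the chosen restrictions $g_i$ fail to form a regular sequence so that $\bigcap V(g_i)$ has dimension exceeding $e$. Each such degeneration is an extra condition on the tuple and so only decreases the dimension of the corresponding stratum, but making this precise is cleanest via an induction on $n$ rather than a single count.
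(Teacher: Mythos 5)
Your strategy---an incidence variety over the Grassmannian, reduction to an irreducible $W\subseteq X$ nondegenerate in a small span and cut out (up to finite ambiguity) by a subset $S$ of the $D_i$, then a Castelnuovo-type count of conditions---is a legitimate self-contained route; the paper instead handles $n=4,5$ by the elementary count and for $n>5$ cites Tseng's Lemmas 4.2--4.3, which carry out exactly this kind of stratified count. However, your count has a gap at precisely the tight stratum. The term $-(s-e)$ in your displayed bound is never justified, and it is load-bearing: take $n=5$, $s=2$, $e=1$, $d_1=d_2=4$, so $X=D_1\cap D_2\subset\PP^5$ and $W$ is a conic in a plane, a component of $D_{i_1}|_{\PP^2}$ with $D_{i_2}\supseteq W$. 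Then $(s+1)(n-s)=9$ and $\sum_{i\notin S}\bigl((s-e+1)d_i+1\bigr)=2\cdot 4+1=9$, so without the mystery term you only get $\dim\mathcal Z\le \dim V$, which does not suffice; and the Castelnuovo bound $(s-e+1)d+1$ is attained exactly by conics, so that factor cannot be improved. The missing codimension comes from the choices you explicitly treat as free, namely the $D_i$ with $i\in S$: for $W$ to be a \emph{proper} low-degree component of $\bigcap_{i\in S}V(g_i)$ the restrictions $g_i$ must be special (a plane quartic with a conic component is codimension $4$ among plane quartics), while if $W$ is the whole complete intersection it has degree $\prod_{i\in S}d_i$ and imposes far more than $(s-e+1)d+1$ conditions. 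Your argument uses neither saving, and the ``harmless degenerations'' you list at the end (non-regular sequences, $\langle W_0\rangle\subseteq D_i$) do not include the relevant one. You need a genuine dichotomy---either $W=\bigcap_{i\in S}V(g_i)$ and you count conditions via $\deg W\ge 4^{\,s-e}$, or $W$ is a proper component and you charge the reducibility of $\bigcap_{i\in S}V(g_i)$ against the $g_i$---and that is exactly the bookkeeping the cited lemmas perform.

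A secondary, repairable inaccuracy: $W$ need not be a component of $\bigcap_{i\in S}V(g_i)$. Your greedy procedure only exhibits it as an \emph{iterated} component (a component of $V_k\cap D_{i_{k+1}}$ at each stage); $V_{k+1}$ may lie inside a strictly larger component of $\bigcap_{j\le k+1}V(g_{i_j})$ coming from a different branch at an earlier stage. Finiteness of the possible $W$ given $(\PP^s,(D_i)_{i\in S})$ still holds, since there are finitely many choices at each of the $s-e$ stages, so the count can be run on iterated components instead---but the claim as written is false and should be restated. The line case and the $e=2$ strata are fine as you have them (the $e=2$ nonlinear count has a comfortable margin even without the extra term); the only place the argument genuinely breaks is the $e=1$, $s=2$ stratum described above.
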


\begin{proof}
When $n=4$ or $5$, the proposition asserts that $X$ does not contain a line or a plane curve, respectively. A simple dimension count shows that this is the case as soon as $\sum_{i=1}^{n-3} d_i > n+1$. When $n>5$, it suffices to show that the codimension of the locus of forms $(D_1, \dots, D_{n-3})$ that contain a curve in $\PP^{n-3}$ is greater than the dimension of the Grassmannian $\mathbb{G}(n-3,n)$.  This  follows from the dimension counts in  \cite[Lemmas 4.2 and 4.3]{Dennis}.
\end{proof}

\section{The construction}\label{sec:complete_intersections} 
In this section, we will describe our construction and prove its main properties. We first establish some notation.

\subsection*{The surfaces}Let $n \geq 4$ be an integer.  Let $2 < d_1 < d_2 \leq  \cdots \leq d_{n-2}$ be integers such that $d = \sum_{i=1}^{n-2} d_i \geq n+1$. Set $e = \prod_{i=2}^{n-2} d_i.$  Let $D_1 \subset \PP^n$ be a hypersurface of degree $d_1$ that contains a linear space $\Phi$ of dimension $n-3$.  When $4 \leq n \leq 5$, we will assume that $D_1$ is smooth. If $n>5$, then $D_1$ cannot be smooth, but we will assume that its singular locus has codimension 5. For $2\leq i \leq n-2$, let $D_i \subset \PP^n$ be very general hypersurfaces of degree $d_i$. Let $X$ be the  complete intersection  $D_1 \cap \cdots \cap D_{n-2}$, which  is a smooth, projective surface of degree $d_1 e$. Let $H$ denote the hyperplane class on $X$. The following proposition summarizes the basic properties of $X$.

\begin{proposition}\label{prop:basic_properties_surface}
The surface $X$ satisfies the following properties.
   \begin{enumerate}
  \item The canonical class of $X$ is $K_X = (d-n-1)H$.
  \item We have $\Pic X = \ZZ H$.
  \item For any integer $m$ and $i=1,2$,  $h^i(\PP^n,I_X(m)) = 0$.
    \item For any integer $m$,  $h^1(X,\OO_X(m)) = 0$ and the map $H^0(\PP^n, \OO_{\PP^n}(m)) \to H^0(X, \OO_X(m))$ is surjective.
  \end{enumerate}
\end{proposition}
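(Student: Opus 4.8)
The proof of Proposition~\ref{prop:basic_properties_surface} is a sequence of standard facts about complete intersections, proved by induction on the number of hypersurfaces we cut with. The one subtlety is that $D_1$ is not general---it is chosen to contain the linear space $\Phi$, and when $n > 5$ it is even singular---so we must be careful that the classical arguments still apply.

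**Parts (1) and (2): adjunction and Lefschetz.** For (1), I would apply the adjunction formula repeatedly. Writing $Y_j = D_1 \cap \cdots \cap D_j$, each $Y_j$ is a smooth complete intersection of dimension $n-j$ (for $j \leq n-2$) once we know the $D_i$ with $i \geq 2$ are very general; the non-general member $D_1$ still cuts everything transversally because the singular locus of $D_1$ has codimension $5$ in $\PP^n$, hence codimension $3$ in the surface $X$, which is empty---so $X$ avoids $\mathrm{Sing}(D_1)$ and the complete intersection is smooth. Then $\omega_{Y_j} = \omega_{Y_{j-1}} \otimes \OO(d_j)$, and starting from $\omega_{\PP^n} = \OO(-n-1)$ we get $K_X = (d - n - 1)H$. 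For (2), I would invoke the Grothendieck--Lefschetz hyperplane theorem for the Picard group: for a smooth complete intersection surface of dimension $\geq 2$... wait, the surface itself is dimension $2$, so one needs $\Pic$ of the threefold $Y_{n-3}$ first and then that surfaces in a threefold with $\Pic = \ZZ$ inherit it---but this fails in general (a surface can acquire new divisor classes). The correct statement is the Noether--Lefschetz type theorem: a very general complete intersection surface of degree $\geq$ some bound has Picard rank one. Here the genericity of $D_2, \dots, D_{n-2}$ is exactly what we need; $D_1$ being fixed is harmless since Noether--Lefschetz loci are still proper closed in the remaining parameter space. This is the step I expect to require the most care in citation, since one must check the Noether--Lefschetz bound is met---this is presumably where the hypothesis $d_1 > 2$ and the extra conditions when $n = 4$ enter.

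**Parts (3) and (4): cohomology of ideal sheaves.** These I would prove together by induction using the Koszul resolution of $\OO_X$ on $\PP^n$ (or equivalently, peeling off one hypersurface at a time via the sequences $0 \to I_{Y_{j-1}}(m - d_j) \to I_{Y_j}(m) \to I_{Y_{j-1}}(m)|_{\text{...}}$; more cleanly, $0 \to \OO_{Y_{j-1}}(m - d_j) \to \OO_{Y_{j-1}}(m) \to \OO_{Y_j}(m) \to 0$). Since $\PP^n$ has $h^i(\OO_{\PP^n}(m)) = 0$ for $0 < i < n$ and all $m$, a descending induction shows $h^i(\OO_{Y_j}(m)) = 0$ for $0 < i < n - j$ and all $m$; taking $j = n-2$ gives $h^1(X, \OO_X(m)) = 0$, which is the first half of (4). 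The surjectivity in (4) and the vanishing $h^1(I_X(m)) = h^2(I_X(m)) = 0$ in (3) then follow from the long exact sequences of $0 \to I_X(m) \to \OO_{\PP^n}(m) \to \OO_X(m) \to 0$ and the analogous sequences at each stage, again by induction: surjectivity of $H^0(\PP^n,\OO(m)) \to H^0(Y_j, \OO(m))$ and vanishing of $h^1(I_{Y_j}(m))$, $h^2(I_{Y_j}(m))$ propagate because $h^1(\OO_{Y_{j-1}}(m - d_j)) = h^2(\OO_{Y_{j-1}}(m-d_j)) = 0$ in the relevant range. None of this uses generality of the $D_i$ or smoothness---it is pure Koszul bookkeeping, valid for any complete intersection---so (3) and (4) are the routine part.

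**Main obstacle.** The genuinely delicate point is (2): ensuring $\Pic X = \ZZ H$. Unlike the others it is not formal---it requires that $X$ be general enough in its linear system to avoid all Noether--Lefschetz loci, and one must confirm that fixing $D_1$ (with its special geometry, including the plane $\Phi$ when restricted appropriately, though $\Phi \not\subset X$ since $\dim \Phi = n - 3 > 2$ generically... actually $\dim\Phi = n-3$ and $\dim X = 2$, so for $n = 5$ they could meet in a curve---worth checking) still leaves a positive-dimensional family in which the very general member has Picard rank one. I would cite the version of the Noether--Lefschetz theorem for complete intersections that allows one hypersurface to be fixed, and note that the numerical hypotheses on the $d_i$ (in particular $d_1 \geq 4, d_2 \geq 6$ when $n = 4$) are precisely calibrated to put us above the Noether--Lefschetz bound.
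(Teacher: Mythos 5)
Your proposal is correct and follows essentially the same route as the paper: adjunction for (1); for (2), the Lefschetz theorem applied to the smooth ambient threefold $D_1\cap\cdots\cap D_{n-3}$ followed by a Noether--Lefschetz theorem for the very general $D_{n-2}$ (the paper cites Moishezon, whose hypothesis here is simply that $K_X=(d-n-1)H$ is effective --- your guess that $d_1\geq 4$, $d_2\geq 6$ for $n=4$ are Noether--Lefschetz bounds is off, as those conditions serve the Fano-scheme statements in Proposition \ref{prop:Fano}); and the Koszul resolution of $I_X$ for (3) and (4), which your hypersurface-peeling induction merely unwinds. No gaps.
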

\begin{proof}
  The canonical class of $X$ is computed by adjunction. The intersection $Y= \cap_{i=1}^{n-3} D_i$ is a smooth, complete intersection threefold. By the Lefschetz hyperplane theorem, $\Pic(Y)$ is cyclic generated by the restriction of the hyperplane class. By assumption $D_{n-2}$ is very general and $K_X$ has a section, so Moishezon's Noether-Lefschetz theorem \cite{Moishezon} implies that $\Pic(X) \cong \ZZ H$. 
  
  Set $W = \oplus_{i=1}^{n-2} \OO_{\PP^n}(-d_i)$. Since $X$ is a complete intersection, the Koszul complex
  $$0 \to \bigwedge^{n-2}W \to \bigwedge^{n-3} W \to \cdots \to \bigwedge^2 W \to W \to I_X \to 0$$ gives a resolution of $I_X$. Since $H^i(\PP^n, (\wedge^j W)(m)) =0$ for $0<i<n$ and all $m$, we conclude that $h^i(\PP^n,I_X(m)) = 0$ if $i=1$ or $2$ by an easy diagram chase. Finally, the long exact sequence associated to the standard sequence 
  $$0 \to I_X(m) \to \OO_{\PP^n}(m) \to \OO_X(m)\to 0$$ shows that $h^1(X,\OO_X(m)) = 0$ and the map $H^0(\PP^n, \OO_{\PP^n}(m)) \to H^0(X, \OO_X(m))$ is surjective.  \end{proof}

 \subsection*{The bundles} By assumption, $D_1$ contains a linear space $\Phi \cong \PP^{n-3}.$ Let $\Lambda \cong \PP^{n-2} \subset \PP^n$ be a linear subspace of codimension 2 containing $\Phi$. Since the singularities of $D_1$ occur in codimension 5, we must have $\Lambda \not\subset D_1$ and $\Lambda \cap D_1=\Phi \cup Y$, where $Y$ is a degree $d_1 - 1$ hypersurface in $\Lambda$. Furthermore, since $D_2, \dots, D_{n-2}$ are very general hypersurfaces, $Z= Y \cap D_2 \cap \cdots \cap D_{n-2}$ is a zero-dimensional complete intersection scheme of length $(d_1-1)e$. Moreover, if $\Lambda$ is general, then $Z$ is reduced. 

\begin{proposition}\label{prop:CB_for_X}
The scheme $Z\subset X$ satisfies the Cayley-Bacharach property for the line bundle $\OO_X(d-n) \cong \omega_X (1)$.
\end{proposition}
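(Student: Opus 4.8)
The plan is to deduce the Cayley--Bacharach property on $X$ from the projective-space Cayley--Bacharach theorem (Theorem~\ref{thm:cayley_bacharach}), applied \emph{inside} the codimension-two linear subspace $\Lambda \cong \PP^{n-2}$, using Proposition~\ref{prop:basic_properties_surface}(4) to move sections of $\OO_X(d-n)$ up to $\PP^n$.

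The observation I would isolate first is that $Z$ is a zero-dimensional complete intersection inside $\Lambda$. By construction $\Lambda \cap D_1 = \Phi \cup Y$ with $Y$ a hypersurface of degree $d_1-1$ in $\Lambda$, and for $2 \le i \le n-2$ each $D_i \cap \Lambda$ is a hypersurface of degree $d_i$ in $\Lambda$; thus $Z = Y \cap (D_2 \cap \Lambda) \cap \cdots \cap (D_{n-2}\cap\Lambda)$ is cut out in $\PP^{n-2}$ by $n-2$ hypersurfaces whose degrees $d_1-1, d_2, \dots, d_{n-2}$ sum to $d-1$. Since $Y \subset D_1$, we also have $Z \subset D_1 \cap \cdots \cap D_{n-2} = X$. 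The key bookkeeping point is that the critical degree in Theorem~\ref{thm:cayley_bacharach}(1) for this complete intersection in $\PP^{n-2}$ is $(d-1)-(n-2)-1 = d-n$, which is exactly the degree of the line bundle $\OO_X(d-n) \cong \omega_X(1)$ at issue.

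With this set up, the argument runs as follows. Let $W \subset Z$ be a subscheme of length one less than that of $Z$, and let $s \in H^0(X,\OO_X(d-n))$ be a section vanishing on $W$. By Proposition~\ref{prop:basic_properties_surface}(4), $s$ extends to a section $\tilde s \in H^0(\PP^n, \OO_{\PP^n}(d-n))$; let $G \subset \PP^n$ be the hypersurface it defines. Since $W \subset X$ and $\tilde s|_X = s$, we get $W \subset G$, and hence $W \subset G \cap \Lambda$. If $\Lambda \subseteq G$, then $Z \subset \Lambda \subseteq G$; otherwise $G \cap \Lambda$ is a divisor of degree $d-n$ in $\PP^{n-2} \cong \Lambda$ containing the subscheme $W$ of $Z$ of colength one, so Theorem~\ref{thm:cayley_bacharach}(1), applied in $\PP^{n-2}$, forces $G \cap \Lambda \supseteq Z$. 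In either case $Z \subset G$, hence $s|_Z = \tilde s|_Z = 0$, which is the desired Cayley--Bacharach property.

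I do not expect a genuine obstacle: all the content is in recognizing $Z$ as a complete intersection in $\Lambda$ with degree sum $d-1$, so that the Cayley--Bacharach degree lands exactly on $d-n$, together with the harmless transfer of sections between $X$ and $\PP^n$ provided by Proposition~\ref{prop:basic_properties_surface}(4). The only routine checks are that $Y$ and the $D_i \cap \Lambda$ are honest hypersurfaces of $\Lambda$ (immediate from $d_1 > 2$ and the genericity of the $D_i$) and the separate, trivial treatment of the case $\Lambda \subseteq G$.
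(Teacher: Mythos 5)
Your proof is correct and takes essentially the same route as the paper: identify $Z$ as a complete intersection whose Cayley--Bacharach critical degree is exactly $d-n$, apply Theorem~\ref{thm:cayley_bacharach}(1), and use Proposition~\ref{prop:basic_properties_surface}(4) to lift sections of $\OO_X(d-n)$ to $\PP^n$. The only cosmetic difference is that the paper regards $Z$ as a complete intersection of type $(d_1-1, d_2,\dots,d_{n-2},1,1)$ in $\PP^n$ itself (degree sum $d+1$, critical degree $d-n$), which lets it apply the theorem directly to the hypersurface $G\subset\PP^n$ and avoids your separate treatment of the case $\Lambda\subseteq G$.
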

\begin{proof}
Observe that the scheme $Z$ is a complete intersection of degrees $d_1-1,d_2, \dots, d_{n-2}, 1, 1$ in $\PP^n$. Hence,  by Theorem \ref{thm:cayley_bacharach}, the scheme $Z$ satisfies the Cayley-Bacharach property for $\OO_{\PP^n}(d-n)$. By Proposition \ref{prop:basic_properties_surface}, every section of $\OO_X(d-n)$ is a restriction of a section of $\OO_{\PP^n}(d-n)$. Hence, $Z$ satisfies the Cayley-Bacharach property for $\OO_X(d-n)$.
\end{proof}

By the Serre correspondence, there is a locally free sheaf $\cE$ on $X$ defined by a sequence
\begin{equation}\label{eqn:defining_seq_E}\tag{$\ast$}
  0 \to \OO_X \to \mathcal{E} \to I_Z(1) \to 0.
\end{equation}
The following proposition summarizes the basic properties of $\cE$.

\begin{proposition}\label{prop:basic_properties_bundle}
Let $\cE$ be the locally free sheaf defined by  (\ref{eqn:defining_seq_E}). Then:
\begin{enumerate}
\item We have $\ext^1(I_Z(1),\OO_X) = 1$, so up to scalars there is a unique non-split extension $\cE$.
\item The bundle $\cE$ is $\mu_H$-stable and $\cE \in M_X(2, H, (d_1-1)e)$.
\item Let $\delta_{i,j}$ denote the Kronecker delta function. Then $h^0(X, \cE) = 3 + \delta_{2, d_1}$ and $h^1(X, \cE)=0$.
\end{enumerate}
\end{proposition}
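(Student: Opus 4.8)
Throughout I would read off everything from the long exact sequences attached to $(\ref{eqn:defining_seq_E})$ and its twists, the vanishing statements of Proposition~\ref{prop:basic_properties_surface}, Serre duality on $X$, and the Cayley--Bacharach Theorem~\ref{thm:cayley_bacharach}(2). For (1), Serre duality gives $\ext^1(I_{Z/X}(1),\OO_X)=h^1(X,I_{Z/X}(1)\otimes\omega_X)=h^1(X,I_{Z/X}(d-n))$ since $\omega_X(1)=\OO_X(d-n)$, and tensoring $0\to I_{X/\PP^n}\to I_{Z/\PP^n}\to I_{Z/X}\to 0$ by $\OO_{\PP^n}(d-n)$ together with $h^1(\PP^n,I_X(m))=h^2(\PP^n,I_X(m))=0$ identifies this with $h^1(\PP^n,I_{Z/\PP^n}(d-n))$. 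Since $Z\subset\PP^n$ is the complete intersection of the $n$ divisors of degrees $d_1-1,d_2,\dots,d_{n-2},1,1$, of total degree $d+1$, I would apply Theorem~\ref{thm:cayley_bacharach}(2) with $Z'=\emptyset$, $Z''=Z$ and $m=0$ to get $h^1(\PP^n,I_Z(d-n))=h^0(\OO_{\PP^n})-h^0(I_Z)=1$ (as $Z\neq\emptyset$); then $\Ext^1(I_{Z/X}(1),\OO_X)$ is one-dimensional, the non-split extension is unique up to scalar, and it is locally free because, by Proposition~\ref{prop:CB_for_X} and the Serre correspondence, at least one non-split extension is. For (2), $(\ref{eqn:defining_seq_E})$ shows $\cE$ has rank $2$, $c_1(\cE)=H$, $c_2(\cE)=\length Z=(d_1-1)e$ and $\mu_H(\cE)=\tfrac12$; since $\Pic X=\ZZ H$, a rank-one subsheaf of $\cE$ is of the form $\OO_X(kH)\otimes I_W$, and one checks that $\cE$ is $\mu_H$-stable iff $H^0(X,\cE(-H))=0$. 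Twisting $(\ref{eqn:defining_seq_E})$ by $-H$ gives $0\to\OO_X(-H)\to\cE(-H)\to I_{Z/X}\to 0$ with $h^0(\OO_X(-H))=0$ (Proposition~\ref{prop:basic_properties_surface}(4)) and $h^0(I_{Z/X})=0$, so $H^0(X,\cE(-H))=0$; hence $\cE$ is $\mu_H$-stable, a fortiori Gieseker stable, and $[\cE]\in M_X(2,H,(d_1-1)e)$.

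For (3), cohomology of $(\ref{eqn:defining_seq_E})$ and $h^1(X,\OO_X)=0$ give $h^0(X,\cE)=1+h^0(X,I_{Z/X}(1))$. As in (1), $h^0(X,I_{Z/X}(1))=h^0(\PP^n,I_{Z/\PP^n}(1))=n-\dim\langle Z\rangle$; and $Z$, being the zero-dimensional complete intersection in $\Lambda\cong\PP^{n-2}$ of hypersurfaces of degrees $d_1-1,d_2,\dots,d_{n-2}$, spans $\Lambda$ when $d_1>2$ (all degrees $\geq 2$) and spans the hyperplane $Y\subset\Lambda$ when $d_1=2$, so $h^0(X,I_{Z/X}(1))=2+\delta_{2,d_1}$ and $h^0(X,\cE)=3+\delta_{2,d_1}$. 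For the vanishing $h^1(X,\cE)=0$ I would use $\cE^\vee\cong\cE(-H)$ and Serre duality to rewrite $h^1(X,\cE)=h^1(X,\cE((d-n-2)H))$, and then take cohomology of the twist of $(\ref{eqn:defining_seq_E})$ by $\OO_X((d-n-2)H)$; since $h^1(X,\OO_X((d-n-2)H))=0$ (Proposition~\ref{prop:basic_properties_surface}(4)), this yields
\[
h^1(X,\cE)=\dim\ker\!\Big(\partial\colon H^1(X,I_{Z/X}((d-n-1)H))\to H^2(X,\OO_X((d-n-2)H))\Big),
\]
where $\partial$ is Yoneda composition with the extension class $\epsilon\in\Ext^1(I_{Z/X}(1),\OO_X)$ (twisted appropriately).

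So it remains to show that $\partial$ is injective, which is the heart of the matter. By Serre duality this is equivalent to surjectivity of the dual map $\partial^\vee\colon H^0(X,\OO_X(1))\to\Ext^1(I_{Z/X}((d-n-1)H),\omega_X)\cong H^1(X,I_{Z/X}((d-n-1)H))^\vee$, again composition with $\epsilon$. Here the local-to-global spectral sequence (using $h^1(X,\OO_X)=0$) shows that $\partial^\vee(\ell)$ is, up to these identifications, multiplication of the restriction $\ell|_Z$ by the section of the line bundle $\mathcal{E}xt^1(I_{Z/X}(1),\OO_X)$ on $Z$ determined by $\epsilon$; this section is nowhere vanishing precisely because $\cE$ is locally free, so $\ker\partial^\vee=\{\ell\in H^0(X,\OO_X(1)):\ell|_Z=0\}=H^0(X,I_{Z/X}(1))=H^0(\PP^n,I_{Z/\PP^n}(1))$. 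Therefore $\dim\operatorname{im}\partial^\vee=(n+1)-h^0(\PP^n,I_Z(1))$, while Theorem~\ref{thm:cayley_bacharach}(2) (with $m=1$, $Z'=\emptyset$, $Z''=Z$), together with the usual reduction from $X$ to $\PP^n$, gives $\dim\Ext^1(I_{Z/X}((d-n-1)H),\omega_X)=h^1(\PP^n,I_Z(d-n-1))=(n+1)-h^0(\PP^n,I_Z(1))$ as well; the two dimensions coincide, so $\partial^\vee$ is onto and $h^1(X,\cE)=0$. I expect the one genuinely delicate point to be precisely this identification of $\partial^\vee$ with a concrete multiplication map through the local-to-global spectral sequence and the nowhere-vanishing section attached to $\epsilon$; granting that, Cayley--Bacharach duality for the complete intersection $Z$ forces the dimension count to work out.
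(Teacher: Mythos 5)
Your proposal is correct. Parts (1) and (2) follow essentially the paper's own route: the same Serre-duality reduction of $\ext^1(I_Z(1),\OO_X)$ to $h^1(\PP^n,I_{Z/\PP^n}(d-n))$ via the vanishing of $h^1,h^2$ of $I_X$, and the same use of $\Pic X=\ZZ H$ to reduce stability to the nonexistence of a section of $\cE(-1)$ (the paper phrases this as a sub-line-bundle $\OO_X(m)$, $m\geq 1$, forced to factor through $\OO_X$; your Hoppe-style reformulation is equivalent). The computation of $h^0(\cE)=3+\delta_{2,d_1}$ is also the same. Where you genuinely diverge is the vanishing $h^1(X,\cE)=0$. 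The paper proves it by bookkeeping: Serre duality gives $h^2(\cE(d-n-2))=h^0(\cE)$, and they compute $h^0(\cE(d-n-2))$ term by term (Koszul complex for $h^1(\PP^n,I_{Z/\PP^n}(d-n-1))$, Euler characteristics of $\OO_X$ and $I_X$ twists) until the identity $h^0+h^2=\chi$ forces $h^1=0$. You instead show directly that the connecting map $\partial\colon H^1(I_Z(d-n-1))\to H^2(\OO_X(d-n-2))$ is injective, by dualizing to multiplication by the extension class and invoking the nowhere-vanishing of its image in $H^0(\mathcal{E}xt^1(I_Z,\OO_X))$ (the local-freeness criterion in Serre's construction) together with the Cayley--Bacharach identity $h^1(\PP^n,I_Z(d-n-1))=(n+1)-h^0(\PP^n,I_Z(1))$. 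The step you flag as delicate --- compatibility of Serre duality with Yoneda composition and of the local-to-global spectral sequence with multiplication by $\ell$ --- is standard and holds here because $h^1(\OO_X(m))=0$ for all $m$, so both $\Ext^1$ groups inject into global sections of the corresponding $\mathcal{E}xt^1$ sheaves supported on $Z$. Your argument is more conceptual and makes the role of local freeness and of Cayley--Bacharach transparent, and it has the structural advantage of not needing $h^0(\cE)$ as an input to the $h^1$ computation; the paper's argument is more elementary in that it avoids the spectral-sequence identification, at the cost of an opaque Euler-characteristic cancellation. Both ultimately rest on the same Cayley--Bacharach input for the complete intersection of total degree $d+1$.
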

\begin{proof}
Throughout the proof, let $I_Z$ denote the ideal sheaf of $Z$ in $X$. We will write $I_{Z/Y}$, such as $I_{Z/\PP^n}$, when referring to the ideal sheaf of $Z$ in an ambient space $Y$. 

By Serre duality, we have
  \[
\ext^1(I_Z(1),\OO_X) = \ext^1(\OO_X,I_Z(K_X+H)) = h^1(X,I_Z(d-n)).
\]
By the Cayley-Bacharach theorem, $h^1(\PP^n, I_{Z/\PP^n}(d-n))=1$. By Proposition \ref{prop:basic_properties_surface}, $$H^1(\PP^n,  I_X(d-n)) = H^2(\PP^n,  I_X(d-n))=0.$$
The long exact sequence of cohomology associated to
  \[
    0 \to I_X(d-n) \to I_{Z/\PP^n} (d-n)\to I_{Z} (d-n)\to 0.
  \]
now implies that $h^1(X, I_Z(d-n))= \ext^1(I_Z(1),\OO_X) =1$, proving  (1).

Since $\Pic(X)\cong \ZZ H$, if $\cE$ is not $\mu_H$-stable, then there exists a subbundle $\OO_X(m) \subset \cE$ with $m \geq 1.$ The composition
$\OO_X(m) \to \cE \to I_X(1)$ must be the zero map, so $\OO_X(m)$ must factor through the map $\OO_X \to \cE$ in the sequence (\ref{eqn:defining_seq_E}). This is impossible if $m \geq 1$, so $\cE$ must be $\mu_H$-stable. A simple Chern class computation shows that $\cE \in M_X(2, H, (d_1-1)e)$. This proves (2).

If $d_1 > 2$, then $Z$ spans $\Lambda$. Hence, the hyperplanes containing  $Z$ are the hyperplanes containing $\Lambda$  and $h^0(X,I_Z(1))=2$. Since $h^1(X,\OO_X) = 0$, the long exact sequence associated to (\ref{eqn:defining_seq_E}) shows that $h^0(X,\cE) = 1 + h^0(X,I_Z(1)) = 3.$ If $d_1=2$, then $Z$ spans a linear space of dimension $n-3$ and $h^0(X, I_Z(1))=3$. In this case, we get $h^0(X,\cE) = 1 + h^0(X,I_Z(1)) = 4.$ 

In order to calculate $h^1(X, \cE)$, we first calculate $h^2(X,\cE) = h^0(X,\cE^*(d-n-1))$. Since $\cE$ has rank 2, we have the isomorphism $$\cE^*\cong  \cE \otimes (\det \cE)^* = \cE(-1).$$ Thus
  \[
h^2(X,\cE) = h^0(X,\cE(d-n-2)).
\]
We twist the sequence (\ref{eqn:defining_seq_E}) by $\OO_X(d-n-2)$ to get
\[
  0 \to \OO_X(d-n-2) \to \cE(d-n-2) \to I_Z(d-n-1) \to 0.
\]
Since $h^1(X, \OO_X(d-n-2))=0$, we obtain 
\begin{equation}\label{eqn:h0_E(d-n-1)}
  h^0(X,\cE(d-n-2)) = h^0(X,\OO_X(d-n-2)) + h^0(X,I_Z(d-n-1)).
\end{equation}
By Serre duality and the fact that the equations defining $X$ have degree at least 2, we have $$h^2(X, \OO_X(d-n-2))= h^0(X, \OO_X(1))=n+1.$$
 Hence,
\begin{equation}\label{eqn:h0_OX(d-n-2)}
  h^0(X,\OO_X(d-n-2)) =\chi(\OO_X(d-n-2)) - (n+1).
\end{equation}
Consider the ideal sheaf exact sequence
 \[ 0 \to I_X(d-n-1) \to I_{Z/\PP^n} (d-n-1)\to I_{Z}(d-n-1) \to 0.
  \]
By Proposition \ref{prop:basic_properties_surface},  $h^1(\PP^n,I_X(d-n-1)) = 0$ and 
\begin{equation}\label{eqn:I_Z}
    h^0(X,I_Z(d-n-1)) = h^0(\PP^n,I_{Z/\PP^n}(d-n-1)) - h^0(\PP^n,I_X(d-n-1)).
\end{equation}
The value of $h^0(\PP^n,I_X(d-n-1))$ is easily determined from the exact sequence
\[
0 \to I_X(d-n-1) \to \OO_{\PP^n}(d-n-1) \to \OO_X(d-n-1) \to 0.
\]
Since $\OO_X(d-n-1) = \omega_X$, we have $h^1(X, \omega_X)=0$ and $h^2(X,\omega_X) = 1$. Furthermore, $h^3(\PP^n, I_X(d-n-1))=1$ and $h^i(\PP^n, I_X(d-n-1))=0$ for $i \neq 0,3$. Hence, 
\begin{equation}\label{eqn:h0_IX(d-n-1)}
h^0(\PP^n,I_X(d-n-1)) = \chi(I_X(d-n-1))+1.
\end{equation}
The scheme $Z$ is a complete intersection of type $1,1,(d_1-1), d_2, \dots, d_{n-2}$.   The Koszul complex for $I_{Z/\PP^n}(d-n-1)$  implies that
$h^1(\PP^n, I_{Z/\PP^n}(d-n-1))= n-1 - \delta_{2, d_1}$. We conclude that 
$$h^0(\PP^n, I_{Z/\PP^n}(d-n-1)) = \chi( I_{Z/\PP^n}(d-n-1)) + n-1-  \delta_{2, d_1}.$$
Combining this with  Equations (\ref{eqn:h0_E(d-n-1)}), (\ref{eqn:h0_OX(d-n-2)}), (\ref{eqn:I_Z}), and (\ref{eqn:h0_IX(d-n-1)}), we obtain
\[
h^2(\cE)= h^0(X,\cE(d-n-2)) = \chi(\OO_X(d-n-2)) + \chi(I_X(d-n-1))- 3 - \delta_{2, d_1}.
\]
Since
$\chi(\cE(d-n-2)) =  \chi(\OO_X(d-n-2)) + \chi(I_X(d-n-1))$ and $h^2(\cE(d-n-2))= h^0(\cE) = 3 + \delta_{2,d_1}$, we conclude that 
 $h^1(X,\cE) = h^1(\cE(n-d-2)) = 0$. This concludes the proof of the proposition.
\end{proof}

We are now ready to state and prove our main theorem. 

\begin{theorem}\label{thm:main_thm}
Let $n \geq 4$ and $d_1 > 2$.  If $n=4$, assume that $d_1 \geq 4$ and $d_2 \geq 6$. Then for every connected (respectively, irreducible) component of the Fano scheme $F_{n-3}(D_1)$, the moduli space $M_X(2,H,(d_1-1)e)$ has a connected (respectively, irreducible) component of the same dimension. 
\end{theorem}

\begin{proof}
Let  $N \subset M_X(2,H,(d_1-1)e)$ be an irreducible component containing the bundle $\cE$ defined by (\ref{eqn:defining_seq_E}). Endow $N$ with its reduced induced structure. By Proposition \ref{prop:basic_properties_bundle}, $h^1(X, \cE)=0$.  Hence, by the upper semicontinuity of cohomology, $h^0(X, \cE') \geq 3$ for every sheaf $\cE' \in N$ and there is an exact sequence
\[
  0 \to \OO_X \to \cE' \to Q \to 0.
\]
Computing Chern classes, we must have $\rk(Q) = 1$, $c_1(Q) = H$, and $c_2(Q) = (1-d_1)e$. We show that $Q$ is torsion-free. Every element $\cE' \in M_X(2, H, (d_1-1)e)$ is $\mu_H$-stable with $c_1(\cE') = H$. Hence, for any integer $m>0$, we must have $h^0(X,\cE'(-m)) = 0$. Since $h^1(X, \OO_X(-m))=0$ by Proposition \ref{prop:basic_properties_surface}, we conclude that $h^0(X,Q(-m)) = 0$, so $Q$ cannot have torsion in dimension zero.

If $Q$ has torsion along a curve in $X$, let $T \subset Q$ be the torsion subsheaf and $Q'$ be the quotient $Q/T$. Then $Q'$ is a quotient of $\cE'$, but $c_1(Q') = aH$ for some $a \leq 0$, which contradicts the stability of $\cE'$. Thus $Q$ is torsion-free, and consequently, $Q = I_{Z'}(1)$ for some scheme $Z'$ of length $(d_1-1)e$. 

The sequence
\[
  0 \to \OO_X \to \cE' \to I_{Z'}(1) \to 0
\]
shows that $h^0(X,I_{Z'}(1)) \geq  2$. We next show that equality must hold. If $$h^0(X,I_{Z'}(1)) >  2,$$ then $Z'$ is contained in a linear space $\Gamma$ of dimension at most $n-3$. The length of $Z'$ is $(d_1-1)e$, which is greater than the degree of $W=\cap_{i=2}^{n-2} D_i$. By B\'ezout's theorem, $W \cap \Gamma$ must contain a curve. Since $D_i$ are very general , $d_i \geq 4$ and $\sum_{i=2}^{n-2} d_i > n+1$, this is not possible by Proposition \ref{prop:no_positive_intersections}. We conclude that   $h^0(X,I_{Z'}(1)) =  2$, $h^0(X,\cE')=3$ and the scheme $Z'$ spans a linear space $\Lambda'$ of dimension $n-2$.

Since $\Pic(X) \cong \ZZ H$, $\Lambda' \cap X$ must be a zero-dimensional complete intersection scheme of degree $d_1 e$ and cannot contain a curve.  Since $\cE'$ is a nontrivial extension of $I_{Z'}(1)$ by $\OO_X$, we have $h^1(X, I_{Z'}(d-n))\geq 1$. The Cayley-Bacharach theorem implies that there exists a linear space $\Phi' \subset \Lambda'$ of dimension at most $n-3$ containing the subscheme $Z'' \subset X \cap \Lambda'$ of length $e$  residual to $Z'$. The dimension of $\Phi'$ must be $n-3$. Otherwise, we could take the span of $\Phi'$ with additional points of $W=\cap_{i=2}^{n-2} D_i$ to obtain a linear space $\Delta$ of dimension $n-3$ which would intersect $W$ in more points than $e= \deg(W)$.  Hence by Bezout's theorem,  $\Delta$ would intersect $W$ in a curve, contradicting Proposition \ref{prop:no_positive_intersections}. We conclude that $h^1(X, I_{Z'}(d-n))= 1$ and $\dim(\Phi')=n-3$.

The scheme $Z''$ of length $e$ is contained in the intersection $\Phi' \cap W$. Since the two schemes have the same length, we conclude that $Z''=\Phi' \cap W$. Since $Z''$ is a complete intersection scheme in $\Phi'$ of hypersurfaces of degree greater than $d_1$, $Z''$ can only be contained in $D_1$ if $\Phi' \subset D_1$.  Reversing the construction, we conclude that $\cE'$ is obtained by the same Serre construction as $\cE$ starting with $\Phi' \in F_{n-3}(D_1)$ instead of $\Phi$. In particular, $H^1(X, \cE')=0$ and $\cE'$ is locally free. 

Let $\mathbb{E}$ be the pushforward of the universal sheaf over $X \times N$ to $N$. By the universal  property of the Fano scheme, we get a morphism $\PP \mathbb{E} \to F_{n-3}(D_1)$. In concrete terms, let  $(\cE', s')$ be a sheaf together with a section up to scaling. Let $Z'$ be the zero locus of $s'$ and $\Lambda'$ its span. The morphism sends $(\cE', s')$  to the span of the points residual to $Z'$ in $\Lambda' \cap X$. Since $\cE'$ and $\cE$ are in the same irreducible component $N$, the $(n-3)$-plane $\Phi'$ must lie in the same irreducible component $K$ of $F_{n-3}(D_1)$ as $\Phi$. In particular, if $n \geq 5$, then Proposition \ref{prop:Fano} gives $\Phi= \Phi'$ and $\cE = \cE'$, and we conclude that $N$ consists of a unique point. The same argument applies if $n=4$ and $\dim(K)=0$. If $n=4$ and $\dim(K)=1$, since there cannot be any nonconstant morphisms from $\PP^2$ to a curve, we conclude that the linear space $\Phi'$ is canonically associated to $\cE'$.  Any irreducible component $N'$ that intersects $N$ must contain a bundle that arises via the Serre correspondence, hence must correspond to an irreducible component $K'$ of $F_{n-3}(D_1)$ that intersects $K$. We thus conclude that the connected component containing $\cE$ corresponds to the connected component of $F_{n-3}(D_1)$ containing $\Phi$ and they have the same dimension.
\end{proof}

\begin{remark}\label{rem:d_1=2}
If $n=4$ or $5$ and $d_1 =2$, then $h^0(X, \cE) =4$ and $h^1(X,\cE)=0$. If $\cE'$ is in the same irreducible component $N$ as $\cE$, then $h^0(X, \cE') \geq 4$. In this case, $Z'$  spans at most a $\PP^{n-3}$. If $n=4$, since $Z'$ has $e>3$ points, it must span a line. If $n=5$ and $Z'$ was contained in a line, then the line would have to be contained in all the hypersurfaces defining $X$ by B\'ezout's theorem. This would contradict the fact that $\Pic(X) \cong \ZZ H$. We conclude that $h^0(X, \cE') = 4$ for every bundle in $N$. The same argument shows that there is a connected component of $M_X(2, H, e)$ for every connected component of $F_{n-3}(D_1)$. In this case, the dimension count  in the proof of Theorem \ref{thm:main_thm} shows that the bundle corresponding to each connected component of $F_{n-3}(D_1)$ is unique.
\end{remark}

\begin{remark}
Assume that $d_1 \geq 3$ when $n\geq 5$ and assume that $d_1 \geq 6$ when $n=4$. Since the general $D_1$ does not contain a linear space of dimension $n-3$, the proof of Theorem \ref{thm:main_thm} shows that the bundles constructed on the special surfaces $X$ do not deform to nearby surfaces. 
\end{remark}

\section{Examples of moduli spaces}\label{sec:examples}
In this section we give explicit examples of  moduli spaces that  exhibit interesting behaviors. We preserve the notation of Section \ref{sec:complete_intersections}.

\begin{example}[Quintic threefold] 
Let $D_1$ be a general quintic threefold in $\PP^4$. Then $D_1$ contains 2875 lines. Let  $D_2$ be a very general hypersurface of degree $d_2 \geq 6$. Let $X = D_1 \cap D_2$. By Theorem \ref{thm:main_thm}, $M_X(2, H, 4d_2)$ contains 2875 zero-dimensional connected components.
\end{example}

\begin{example}[Moduli spaces with arbitrarily many isolated points I]\label{ex:many_components}
Let $d_1 \geq 6$ and let $D_1$ be the hypersurface of degree $d_1$ defined by 
 $$f(z_0, \dots, z_4)= z_0^{d_1} - z_1^{d_1} + z_2^{d_1} - z_3^{d_1} + z_4g(z_0,\dots,z_4)=0$$
where $g$ is a  general homogeneous polynomial of degree $d_1-1$. Then $D_1$ is smooth and contains $3d_1^2$ lines contained in the Fermat surface $f=z_4=0$. Let $\ell$ be one of these lines.  Up to permutations and roots of unity, we may assume that $\ell$ is defined by $z_0-z_1=z_2-z_3=z_4=0$. In the normal bundle exact sequence 
 \[
    0 \to N_{\ell/D_1} \to N_{\ell/\PP^4} \cong  \OO_{\ell}(1)^{\oplus 3} \stackrel{M}{\longrightarrow} \OO_{\ell}(d_1) \to 0,
  \]
the map $M$ is given by $[(d_1-1) s^{d_1-1}, (d_1-1) t^{d_1-1}, g(s,s,t,t,0)]$ in local coordinates $s,t$ on $\PP^1$ \cite[\S 3]{CoskunRiedl}. Hence, for a general choice $g$, $$N_{\ell/D_1} \cong \OO_{\ell}\left(\left\lfloor \frac{3-d_1}{2}\right\rfloor\right) \oplus \OO_{\ell}\left(\left\lceil \frac{3-d_1}{2}\right\rceil\right).$$ This implies $h^0(\ell, N_{\ell/D_1})=0$. We conclude that these $3d_1^2$ lines are isolated points of $F_1(D_1)$. Let $d_2 > d_1$ and let $D_2$ be a very general hypersurface in $\PP^4$ of degree $d_2$. Let $X = D_1 \cap D_2$. By Theorem \ref{thm:main_thm},  $M_X(2, H, (d_1-1)d_2)$ has at least $3d_1^2$ connected components of dimension $0$.
\end{example}

\begin{example}[Spinor bundles]
Let $D_1$ be a smooth quadric fourfold in $\PP^5$. Let $D_2$ and $D_3$ be very general hypersurfaces of degrees $d_2, d_3 \geq 3$, respectively. Let $X = D_1 \cap D_2 \cap D_3$. Since $F_2(D_1)$ has two connected components, by Remark \ref{rem:d_1=2}, $M_X(2, H, d_2d_3)$ has 2 connected components of dimension 0. One can be explicit about these two bundles: they are the restrictions of the two spinor bundles on the quadric $D_1$. Identifying $D_1$ with the Grassmannian $G(2,4)$ under the Pl\"{u}cker embedding, the spinor bundles are the universal quotient bundle $Q$ and the dual $S^*$ of the universal subbundle.  These two bundles restrict to two bundles in $M_X(2, H, d_2d_3)$, which form two connected components. Since $h^0(G(2,4), S^*) = h^0(G(2,4), Q)=4$ and the zero loci of the sections are planes, it is easy to see that the construction in \S \ref{sec:complete_intersections} produces these two bundles.
\end{example}

\begin{example}[Moduli spaces with arbitrarily many isolated points II]\label{ex:many_components2}
Let $d_1 \geq 3$ and let $D_1$ be the Fermat hypersurface in $\PP^5$ defined by 
$\sum_{i=0}^5 x_i^{d_1} =0$. Then $D_1$ contains $15 d_1^3$ planes. Letting $D_2, D_3$ be very general hypersurfaces of degrees $d_2, d_3 > d_1$, Theorem \ref{thm:main_thm} shows that $M_X(2, H, (d_1-1)d_2d_3)$ has at least $15 d_1^3$ connected components. 
\end{example}

\begin{example}[Moduli spaces with arbitrarily many positive dimensional connected components]\label{ex:many_components3}
This example shows that there can be many positive dimensional connected components of $M_X(2,H,(d_1-1)e)$.  Let $g(z_0,z_1)$ and $h(z_2,z_3,z_4)$ be general forms of degree $d_1>5$. Let $D_1 \subset \PP^4$ be defined by the equation  $$f (z_0, \dots, z_4) = g(z_0,z_1)+h(z_2, z_3, z_4)=0.$$ Let $\ell$ be the line defined by $z_2=z_3=z_4=0$ and let $\Lambda$ be the plane defined by $z_0=z_1=0$. Then $\ell$ intersects $D_1$ at the $d_1$ roots of $g(z_0, z_1)$. The tangent hyperplane at one of these points $p_i$ intersects $D_1$ in the cone over the plane curve $z_0=z_1=h(z_2,z_3, z_4)=0$ with vertex at $p_i$. Hence, $D_1$ contains $d_1$ disjoint one-parameter families of lines.

We claim that $D_1$ contains no other lines. Let $U \subset \mathbb{G}(1,4)$ denote the open subset of lines not meeting $\ell$.  Consider the incidence correspondence
\[
I = \{ (g,h,m)\in H^0(\PP^1,\OO_{\PP^1}(d_1)) \times H^0(\PP^2,\OO_{\PP^2}(d_1)) \times U : (g(z_0, z_1)+h(z_2, z_3, z_4))|_m \equiv 0 \}.
\]
The $\PGL_5$ action on $U$ has two orbits: the lines that intersect $\Lambda$ and the lines that do not. We can compute the fiber dimension of $I$ over $U$ by choosing the following representatives of the two orbits
$$m_1: z_0-z_1=z_0-z_3=z_4=0 \quad \mbox{and} \quad m_2: z_0-z_2=z_1 - z_3=z_4=0.$$
Let $g= \sum_{i=0}^{d_1} a_{i}z_0^iz_1^{d_1-i}$ and let $h=\sum_{i,j} b_{i,j} z_2^i z_3^j z_4^{d_1-i-j}$. If $g+h$ vanishes on $m_1$, then the coefficients of $g$ and $h$ must satisfy the $d_1+1$ linear conditions $b_{i, d_1-i} =0$ for $1\leq i \leq d_1$ and $b_{0,d_1} + \sum_{i=0}^{d_1} a_i =0$.  If $g+h$ vanishes on $m_2$, then the coefficients of $g$ and $h$ must satisfy the $d_1+1$ linear conditions $a_i + b_{i, d_1-i}=0$ for $0 \leq i \leq d_1$. Since $d_1+1 > 6 = \dim U$, we conclude that $$\dim I < \dim H^0(\PP^1,\OO_{\PP^1}(d_1)) \times H^0(\PP^2,\OO_{\PP^2}(d_1)).$$  Hence, for a general choice of $g$ and $h$, $D_1$ does not contain a line of $U$. Hence, any line in $D_1$  intersects $\ell$ and must pass through one of the points $p_i$. Since the tangent hyperplane at $p_i$ intersects $D_1$ in a cone with vertex at $p_i$, the line must be one of the rulings of the cone. 

Let $D_2\subset \PP^4$ be a very general hypersurface of degree $d_2 > d_1$ and let $X= D_1 \cap D_2$. By Theorem \ref{thm:main_thm}, $M_X(2, H, (d_1-1)d_2)$ has at least $d_1$ one-dimensional connected components.  
\end{example}

\bibliographystyle{plain}
  
\end{document}